

\documentclass[final,3p,times,twocolumn]{elsarticle}


\usepackage{amssymb}
\usepackage{amsmath}
\usepackage{url}
\usepackage{graphicx}
\usepackage{subfig}
\usepackage{graphicx}
\usepackage{float}
\usepackage{rotating}
\usepackage{orcidlink}
\usepackage{tikz}
\usepackage{hyperref}
\usepackage{enumitem}
\usepackage{stfloats}
\usepackage{amsthm}

\newtheorem{theorem}{Theorem}

\newtheorem{lemma}{Lemma}
\newtheorem{definition}{Definition}
\newtheorem{conjecture}{Conjecture}
\definecolor{orcidlogocol}{HTML}{A6CE39}

\newcommand{\orcidicon}{%
    \tikz[baseline=-0.5ex]\node[shape=circle,fill=orcidlogocol,inner sep=1pt] {\tiny\textsf{ID}};%
}

\newcommand{\orcid}[1]{\href{https://orcid.org/#1}{\orcidicon}}

\usepackage{booktabs}



\begin{document}

\begin{frontmatter}



\title{Curvature-Weighted Contact Networks: Spectral Reduction and Global Stability in a Markovian SIR Model}


\author[1,2]{Marcílio Ferreira dos Santos}
\ead{marcilio.santos@ufpe.br}
\ead[url]{https://orcid.org/0000-0001-8724-0899}

\cortext[cor1]{Corresponding author.}


\affiliation[1]{
    organization={Núcleo de Formação de Docentes, Universidade Federal de Pernambuco (UFPE)}, 
    addressline={}, 
    city={Caruaru},
    postcode={},
    state={PE},
    country={Brazil}
}

\begin{abstract}
We propose a new network-based SIR epidemic model in which transmission is modulated by a curvature-weighted contact matrix that encodes structural and geometric features of the underlying graph. The formulation encompasses both adjacency-driven and Markovian mixing, allowing heterogeneous interactions to be shaped by curvature-sensitive topological properties. We prove that the basic reproduction number satisfies
\[
R_0=\frac{\beta}{\gamma}\lambda_{\max}(M),
\]
where $M$ is the curvature-weighted transmission operator. Using Perron--Frobenius theory together with linear and nonlinear Lyapunov functionals, we establish: (i) global asymptotic stability of the disease-free equilibrium when $R_0<1$, and (ii) existence and global asymptotic stability of a unique endemic equilibrium when $R_0>1$. Our results show that curvature acts as a geometric regularizer of connectivity, lowering spectral radii, raising effective epidemic thresholds, and organizing the long-term dynamics through monotone contraction toward the endemic state. This framework generalizes classical network epidemiology by integrating geometric information directly into transmission operators, providing a rigorous foundation for epidemic dynamics on structurally heterogeneous networks.

\end{abstract}

\begin{graphicalabstract}
\end{graphicalabstract}

\begin{highlights}
\item We introduce a curvature–weighted network operator for SIR dynamics, incorporating geometric structure into transmission.
\item The basic reproduction number is characterized spectrally as $R_0=\frac{\beta}{\gamma}\lambda_{\max}(M)$ for any curvature–modulated contact matrix.
\item A Perron–Frobenius Lyapunov functional establishes global stability of the disease–free equilibrium when $R_0<1$.
\item A nonlinear Volterra functional proves existence and global stability of a unique endemic equilibrium whenever $R_0>1$.
\item Results demonstrate how network curvature reshapes epidemic thresholds and long–term persistence, extending classical theory in a geometric direction.
\end{highlights}

\begin{keyword}
SIR model \sep Network epidemiology \sep Curvature \sep Perron--Frobenius theory \sep Spectral radius \sep Global stability \sep Lyapunov functionals
\end{keyword}

\end{frontmatter}



\section{Introduction}

Recent advances in spatial epidemiology have emphasized that infectious disease
spread is not solely the outcome of local transmission parameters, but is deeply
shaped by structural heterogeneity in contact networks. Classical studies of
dengue and other arboviruses have shown that spatial fragmentation, mobility
constraints, and socioenvironmental determinants produce strong mesoscale
patterns of persistence and re-emergence \cite{barcellos2001, honorio2009,
massaro2019, carvalho2021}. These spatial patterns correlate with heterogeneous
neighborhood connectivity, human movement, and environmental suitability
\cite{parselia2019, adeola2017}, yet standard network-based epidemic models
typically incorporate only adjacency information and not deeper geometric
characteristics of these networks.

At the same time, network geometry has emerged as an important mathematical
framework for characterizing robustness, fragility, and flow dynamics in complex
systems. Measures such as Forman--Ricci curvature, Ollivier--Ricci curvature,
and communicability-based curvature capture contraction, expansion, and
structural bottlenecks in networks \cite{Estrada2012, Estrada2025Curv,
de2021using}. Negative curvature is associated with accelerated spreading and
divergent paths, whereas positive curvature reinforces local cohesion and
structural containment. Despite these insights, the integration of curvature
measures into transmission operators for epidemiological models remains limited,
with most applications being empirical rather than mechanistic.

The present work fills this gap by developing a curvature-weighted epidemic
model in which the contact matrix is defined as
\[
    M = A \odot W(\kappa),
\]
where the weights incorporate geometric information derived from a curvature
measure $\kappa$. Because curvature reflects global structural organization—
including redundancy of paths, mesoscale cohesion, and communicability
\cite{Estrada2012, GrassBoada2025}—the resulting operator provides a principled
way to modulate transmission strength based on the underlying topology. Our
approach builds directly on evidence that spatially structured diseases such as
dengue respond strongly not only to local environmental factors but also to
mesoscale connectivity constraints and human mobility patterns \cite{teixeira2009,
zheng2019spatiotemporal, borges2024}.

Within this framework, the spectral radius of $M$ defines a curvature-adjusted
basic reproduction number
\[
    R_0 = \frac{\beta}{\gamma}\lambda_{\max}(M),
\]
thus extending classical threshold theorems for epidemic dynamics
\cite{keeling2008} to a geometric and topology-aware setting. This formulation
is consistent with recent methodological advances relating graph curvature to
network robustness and epidemic vulnerability \cite{de2021using}.

Finally, this framework connects naturally to real-world spatial analyses:
network curvature responds to urban structure, transportation, clustering in
housing patterns, and socioenvironmental inequalities—all of which have been
empirically linked to dengue dynamics in Brazilian cities \cite{oliveira2022,
FerreiraMelo2025, FerreiraMelo2025Dataset}. As such, curvature-weighted models
offer a mathematically rigorous and empirically grounded extension to spatial
epidemiology, providing a bridge between network geometry, transmission
modeling, and spatial public health analysis.

\section{Model Formulation and Theoretical Results}

We consider a population structured as a connected and irreducible contact network with $n$ nodes. Each node represents a spatial region or subpopulation, and each edge corresponds to a potential transmission pathway. Let $s_i(t)$ and $y_i(t)$ denote, respectively, the fractions of susceptible and infected individuals at node $i$ at time $t$. The recovered compartment is omitted, as it does not directly affect the infection dynamics.

\subsection{The Curvature-Weighted Graph and the Effective Contact Matrix}

Let $A = (A_{ij})$ be the adjacency matrix of the underlying contact network.  
We introduce a curvature-based weight matrix $W = (W_{ij})$, constructed from a curvature measure $\kappa_{ij}$ defined on the edges of the graph (e.g., communicability curvature, Ollivier--Ricci curvature, Forman curvature, or correlation-weighted communicability curvature).  
The weights are defined by
\[
    W_{ij} = f(\kappa_{ij}),
\]
where $f$ is a smooth, monotone, strictly positive function, ensuring $W_{ij}>0$ whenever $A_{ij}=1$. Thus, curvature modulates only existing edges, preserving the sparse topology of $A$.

A particularly natural choice is the exponential map
\[
    f(\kappa_{ij}) = e^{-\kappa_{ij}},
\]
which guarantees strict positivity regardless of the sign of the curvature.  
This transformation is consistent with exponential operators widely used in network theory, such as $e^A$, which underlies Estrada's communicability framework \cite{Estrada2012}.  
Accordingly, the mapping $e^{-\kappa}$ induces a \emph{curvature-modulated diffusion kernel}: negative curvature amplifies connection strength, promoting diffusion, whereas positive curvature attenuates edge influence, acting as a geometric bottleneck.

Recent empirical results reinforce this paradigm.  
In particular, \cite{santos2025correlationweightedcommunicabilitycurvaturestructural} shows that correlation-weighted communicability curvature accurately captures the spatial heterogeneity of dengue transmission in Recife (2015--2024).  
These findings provide direct motivation for treating curvature fields as structural modulators of epidemic dynamics.

With these weights, the effective contact matrix is defined by the Hadamard product
\[
    M = A \odot W,
\]
which preserves the sparsity pattern of $A$ while incorporating geometric or empirical information through the curvature-based weights.

Since several notions of curvature may assume negative values, positivity of the transition matrix is ensured by row-stochastic normalization:
\[
    M_{ij}
    =
    \frac{A_{ij} W_{ij}}{\sum_{k=1}^n A_{ik} W_{ik}},
\]
which guarantees $M_{ij}\ge 0$ and $\sum_j M_{ij}=1$.  
Thus, the vector $My$ represents a curvature-weighted average of infections among neighbors, consistent with an epidemiological interpretation of transmissibility.

This formulation turns $M$ into a \emph{curvature-dependent diffusion kernel}, analogous to deformations of heat kernels on Riemannian manifolds with variable curvature.  
Regions of low curvature (high communicability) amplify transmission, whereas regions of high curvature function as geometric barriers to contagion.


\subsection{Spectral Effect of Curvature}

\begin{theorem}[Curvature-Induced Spectral Reduction]
\label{thm:spectral-reduction}
Let $A$ be the adjacency matrix of a connected graph and let
\[
    B = A \odot W,\qquad 0 < W_{ij} \le 1,
\]
with $W_{ij}=1$ if and only if curvature is constant across edges incident to $i$.  
Then
\[
    \lambda_{\max}(B)
    \;\le\;
    \lambda_{\max}(A),
\]
with equality if and only if $W$ is constant over the edges.
\end{theorem}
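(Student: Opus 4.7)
The plan is to reduce the claim to a standard monotonicity property of the Perron root for irreducible nonnegative matrices. First I would verify that both $A$ and $B$ sit in the right framework: $A$ is a symmetric $0$--$1$ matrix of a connected graph, hence irreducible; and since the hypothesis enforces $W_{ij} > 0$ on every edge, the Hadamard product $B = A \odot W$ preserves the support of $A$, so $B$ is a nonnegative irreducible matrix with the same zero pattern as $A$. By Perron--Frobenius, both matrices admit simple, real, positive Perron eigenvalues $\lambda_{\max}(A)$ and $\lambda_{\max}(B)$ with strictly positive right eigenvectors $u$ and $v$, and strictly positive left eigenvectors $u^{\ast}$, $v^{\ast}$.

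For the weak inequality, I would use the entrywise bound $0 \le B_{ij} = A_{ij} W_{ij} \le A_{ij}$, which follows from $W_{ij} \le 1$ and $A_{ij} \in \{0,1\}$. The monotonicity of the spectral radius under componentwise domination of nonnegative matrices then yields $\lambda_{\max}(B) \le \lambda_{\max}(A)$. If desired, this can be made self-contained via the Collatz--Wielandt characterization
\[
    \lambda_{\max}(B) = \max_{x>0} \min_i \frac{(Bx)_i}{x_i} \le \max_{x>0} \min_i \frac{(Ax)_i}{x_i} = \lambda_{\max}(A),
\]
since $(Bx)_i \le (Ax)_i$ entrywise when $x > 0$.

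For the equality case, I would exploit the strict form of Perron monotonicity for irreducible matrices through a bilinear pairing. Let $v > 0$ be the right Perron eigenvector of $B$ and $u^{\ast} > 0$ the left Perron eigenvector of $A$. Then
\[
    u^{\ast \mathsf T} A v = \lambda_{\max}(A)\, u^{\ast \mathsf T} v, \qquad u^{\ast \mathsf T} B v = \lambda_{\max}(B)\, u^{\ast \mathsf T} v,
\]
so subtracting gives
\[
    \bigl(\lambda_{\max}(A) - \lambda_{\max}(B)\bigr)\, u^{\ast \mathsf T} v = u^{\ast \mathsf T}(A - B)\, v.
\]
Since $A - B \ge 0$ componentwise and $u^{\ast}, v$ are strictly positive, the right-hand side vanishes only when $(A-B)_{ij} = 0$ on every edge of the graph, i.e.\ when $W_{ij} = 1$ for all $(i,j)$ with $A_{ij} = 1$. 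Equivalently, equality $\lambda_{\max}(B) = \lambda_{\max}(A)$ occurs iff $W$ is constant and equal to $1$ over the edges (the case of constant curvature in the hypothesis).

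The main delicate point, which I would flag in the writeup, is the irreducibility argument underpinning the strict inequality: without $W_{ij} > 0$ on every edge one cannot conclude that $B$ is irreducible, and the strict positivity of both the left Perron eigenvector of $A$ and the right Perron eigenvector of $B$ is what turns the nonnegative bilinear form $u^{\ast \mathsf T}(A-B)v$ into a strictly positive quantity the moment $W$ fails to be uniformly $1$. Once this is carefully established, the rest of the argument is essentially bookkeeping.
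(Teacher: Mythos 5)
Your proof is correct, and for the weak inequality it follows essentially the same route as the paper: both arguments rest on the entrywise domination $0 \le B_{ij} \le A_{ij}$ combined with the Collatz--Wielandt characterization $\lambda_{\max}(X)=\max_{x>0}\min_i (Xx)_i/x_i$. Where you genuinely diverge is in the equality case, and there your argument is the stronger one. The paper asserts that equality of the two max--min values ``requires $(Bx)_i=(Ax)_i$ for every positive vector $x$,'' which does not follow from the variational characterization as stated: equality of two maxima over $x$ does not force the inner quantities to agree pointwise in $x$, so the paper's equality step is left with a gap. Your bilinear pairing closes it cleanly: writing
\[
\bigl(\lambda_{\max}(A)-\lambda_{\max}(B)\bigr)\,u^{\ast\mathsf T}v \;=\; u^{\ast\mathsf T}(A-B)\,v
\]
with $u^{\ast}$ the left Perron vector of $A$ and $v$ the right Perron vector of $B$, both strictly positive by irreducibility (which you correctly verify for $B$ using $W_{ij}>0$ on every edge), the nonnegativity of $A-B$ forces $A=B$ entrywise whenever the spectral radii coincide. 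One further point in your favor: your conclusion that equality holds iff $W_{ij}\equiv 1$ (not merely ``$W$ constant'') is the accurate characterization --- a constant weight $c<1$ gives $\lambda_{\max}(B)=c\,\lambda_{\max}(A)<\lambda_{\max}(A)$, so the theorem's stated equality condition ``$W$ constant over the edges'' is itself slightly imprecise, and your writeup silently corrects it.
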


\begin{proof}
Since $0 < W_{ij} \le 1$, we have $0 \le B_{ij} \le A_{ij}$ for all $(i,j)$.  
By the Perron--Frobenius variational characterization of the dominant eigenvalue of nonnegative irreducible matrices,
\[
    \lambda_{\max}(X)
    =
    \max_{x>0}
    \min_{i}
    \frac{(Xx)_i}{x_i}.
\]
For any $x>0$,
\[
    (Bx)_i \le (Ax)_i,
\]
and therefore
\[
    \min_i \frac{(Bx)_i}{x_i}
    \;\le\;
    \min_i \frac{(Ax)_i}{x_i}.
\]
Taking the maximum over all positive $x$ yields  
$\lambda_{\max}(B) \le \lambda_{\max}(A)$.  
Equality requires $(Bx)_i=(Ax)_i$ for every positive vector $x$, which in turn forces $W_{ij}\equiv 1$ on all edges.
\end{proof}

\medskip

This result formalizes the geometric interpretation: curvature acts as a \emph{structural regularizer}, reducing the spectral radius of the contact matrix—and therefore the effective basic reproduction number,
\[
    R_0 = (\beta/\gamma)\,\lambda_{\max}(M).
\]

\medskip

Throughout the theoretical analysis, we assume:

\begin{itemize}[label=--]
    \item[(H1)] $M$ is nonnegative and irreducible;
    \item[(H2)] the underlying graph is connected;
    \item[(H3)] $0 \le s_i(t), y_i(t) \le 1$ and $s_i(t)+y_i(t)\le 1$ for all $t$.
\end{itemize}

\subsection{The Curvature-Weighted SIR System}

The dynamics at each node are given by
\begin{equation}
\label{eq:model}
\begin{aligned}
    s_i' &= u(1 - s_i) - \beta\, s_i \sum_{j=1}^n M_{ij} y_j, \\
    y_i' &= \beta\, s_i \sum_{j=1}^n M_{ij} y_j - \gamma\, y_i,
\end{aligned}
\end{equation}
for $i=1,\dots,n$, where $\beta>0$ is the transmission rate, $\gamma>0$ is the
recovery rate, and $u>0$ is a demographic or replenishment parameter.

In vector form,
\[
    s' = u(\mathbf{1}-s) - \beta\, s \odot (My),
    \qquad
    y' = \beta\, s \odot (My) - \gamma\, y,
\]
where $\odot$ denotes the Hadamard product.

\subsection{Positivity and Invariance}

\begin{lemma}
The region
\[
    \Omega 
    = 
    \{(s,y)\in [0,1]^n \times [0,1]^n : s_i + y_i \le 1 \}
\]
is positively invariant under the flow of \eqref{eq:model}.
\end{lemma}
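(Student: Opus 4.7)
The plan is to establish positive invariance via the standard Nagumo tangential criterion: for a locally Lipschitz vector field on a closed convex set, it suffices to verify that the field points into (or is tangent to) the set on every face of its boundary. Since the right-hand side of \eqref{eq:model} is polynomial, hence smooth on $\mathbb{R}^{2n}$, this reduces to a sign check on the three families of hyperplanes carving out $\Omega$: $\{s_i=0\}$, $\{y_i=0\}$, and $\{s_i+y_i=1\}$ for $i=1,\dots,n$. Upper bounds $s_i\le 1$ and $y_i\le 1$ are automatic from the other two, since on $\Omega$ both coordinates are nonnegative and their sum is controlled.

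I would then handle the three faces in turn. On $\{s_i=0\}$, the susceptible equation reduces to $s_i'=u>0$, so the flow strictly re-enters $\Omega$. On $\{y_i=0\}$, the infection equation collapses to $y_i'=\beta s_i\sum_j M_{ij}y_j\ge 0$ by nonnegativity of $s_i$, $M_{ij}$, and the remaining $y_j$'s inside $\Omega$, so this face is weakly invariant independently of the graph topology. For the face $\{s_i+y_i=1\}$, adding $s_i'$ and $y_i'$ produces a crucial cancellation of the nonlinear transmission term, yielding
\[
(s_i+y_i)'=u(1-s_i)-\gamma y_i=(u-\gamma)y_i,
\]
which is non-positive under the natural SIR regime $\gamma\ge u$ (so that $r_i:=1-s_i-y_i\ge 0$ remains a meaningful residual compartment). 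Combined with compactness of $\Omega$ and smoothness of the vector field, these three inequalities deliver both positive invariance and global existence of solutions starting in $\Omega$.

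The main conceptual step is the third face: one must notice that summing $s_i'$ and $y_i'$ annihilates the bilinear coupling $\beta s_i(My)_i$. This cancellation is what makes the invariance argument componentwise and completely decouples it from the curvature-weighted structure of $M$—neither the sparsity pattern, the row-stochastic normalization, nor the spectral properties of $M$ enter the calculation. The only subtlety to flag is the demographic assumption $\gamma\ge u$, which is implicit in the SIR interpretation of \eqref{eq:model} but deserves to be stated explicitly, since without it the sum $s_i+y_i$ could exceed unity on orbits with $y_i>0$.
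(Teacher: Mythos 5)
Your proof is correct and takes a genuinely different, and considerably more careful, route than the paper's. The paper disposes of the lemma in two sentences: it observes that all loss terms are proportional to the state variables and all gain terms are bounded, and then invokes ``standard comparison arguments.'' That reasoning does establish nonnegativity of $s_i$ and $y_i$ (your faces $\{s_i=0\}$ and $\{y_i=0\}$, where your computations $s_i'=u>0$ and $y_i'=\beta s_i(My)_i\ge 0$ are exactly right), but it never engages with the only nontrivial face of $\Omega$, namely $\{s_i+y_i=1\}$. Your Nagumo-style boundary analysis does, and the cancellation you exploit, $(s_i+y_i)'=u(1-s_i)-\gamma y_i=(u-\gamma)y_i$ on that face, is the substantive content of the lemma. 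You are also right that the upper bounds $s_i\le 1$, $y_i\le 1$ then come for free, and that nothing about the curvature weighting, sparsity, or normalization of $M$ enters beyond $M\ge 0$.

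The most valuable part of your proposal is the caveat you flag: the inequality $(u-\gamma)y_i\le 0$ requires $\gamma\ge u$, and this hypothesis appears nowhere in the paper, which assumes only $\beta,\gamma,u>0$. This is not a cosmetic point. For $u>\gamma$ the lemma as stated is false: already for $n=1$ with $s_1=y_1=\tfrac12$ one gets $(s_1+y_1)'=\tfrac12(u-\gamma)>0$, so the orbit exits $\Omega$ immediately. The paper's ``comparison argument'' silently skips this, so its own proof is incomplete. Either the standing assumption $\gamma\ge u$ must be added (natural epidemiologically, since recovery is much faster than demographic turnover), or $\Omega$ must be replaced by a larger invariant box such as $[0,1]^n\times[0,\,C]^n$ without the simplex constraint. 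Your proof, with the hypothesis $\gamma\ge u$ made explicit, is the one that should stand.
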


\begin{proof}
All loss terms in $s_i'$ and $y_i'$ are proportional to the variables 
themselves, and all gain terms are bounded by $s_i \le 1$ and 
$\sum_j M_{ij} y_j \le \|M\|_\infty$.  
Standard comparison arguments therefore imply that if 
$(s(0),y(0)) \in \Omega$, then $(s(t),y(t)) \in \Omega$ for all $t>0$.
\end{proof}

\subsection{Linearization and the Curvature-Modulated Reproduction Number}

The disease-free equilibrium (DFE) is
\[
    E_0 = (s^*,y^*) = (\mathbf{1},\mathbf{0}).
\]
Linearizing \eqref{eq:model} around $E_0$ yields
\[
    y' = (\beta M - \gamma I)\, y.
\]
By the Perron--Frobenius theorem, since $M$ is nonnegative and irreducible,
its spectral radius is equal to its dominant eigenvalue $\lambda_{\max}(M)$.

\begin{definition}[Curvature-weighted basic reproduction number]
\[
    R_0 = \frac{\beta}{\gamma}\,\lambda_{\max}(M).
\]
\end{definition}

\subsection{Global stability of the disease-free equilibrium}

\begin{theorem}
\label{thm:globalDFE}
If $R_0 < 1$, then the disease-free equilibrium $E_0$ is globally
asymptotically stable in $\Omega$.
\end{theorem}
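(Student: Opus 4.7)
The plan is to construct a linear Lyapunov functional weighted by the Perron--Frobenius left eigenvector of $M$. By hypothesis (H1), $M$ is nonnegative and irreducible, so the Perron--Frobenius theorem furnishes a strictly positive row vector $v^\top>0$ satisfying $v^\top M = \lambda_{\max}(M)\, v^\top$. Define
\[
    L(y) = v^\top y = \sum_{i=1}^n v_i\, y_i,
\]
which is smooth, nonnegative on $\Omega$, and vanishes exactly on the face $\{y=\mathbf{0}\}$ containing $E_0$. This is the standard ``spectral'' choice that turns the linearization at the DFE into a usable scalar comparison.

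First I would differentiate $L$ along trajectories of \eqref{eq:model}. The $y$ equation gives
\[
    \dot L = \beta \sum_{i=1}^n v_i\, s_i\, (My)_i \;-\; \gamma\, v^\top y.
\]
Since $(s,y)\in\Omega$ forces $0\le s_i\le 1$, while $v_i>0$ and $(My)_i\ge 0$, each term in the first sum satisfies $v_i s_i (My)_i \le v_i (My)_i$. Summing and invoking the left eigenvector identity yields $\sum_i v_i s_i (My)_i \le v^\top M y = \lambda_{\max}(M)\, v^\top y$, and together with $R_0 = (\beta/\gamma)\,\lambda_{\max}(M)$ this produces the key differential inequality
\[
    \dot L \;\le\; \bigl(\beta\lambda_{\max}(M) - \gamma\bigr)\, L \;=\; \gamma\,(R_0-1)\, L.
\]
When $R_0<1$ the coefficient is strictly negative, so Gronwall's inequality gives $L(t)\le L(0)\,e^{\gamma(R_0-1)t}\to 0$. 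Because every $v_i$ is strictly positive and $y_i\ge 0$, this forces $y_i(t)\to 0$ componentwise.

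To upgrade this to convergence of the full state to $E_0$, I would feed $y(t)\to 0$ back into the $s$ equation, which gives $s_i' \ge u(1-s_i) - \beta\, s_i\, (My)_i$ with $(My)_i(t)\to 0$. An asymptotic comparison with the scalar ODE $\xi'=u(1-\xi)$ yields $\liminf_{t\to\infty}s_i(t)\ge 1$, while $s_i\le 1$ on $\Omega$ forces $s_i(t)\to 1$. Alternatively, LaSalle's invariance principle on the compact positively invariant set $\Omega$ shows that the largest invariant set inside $\{\dot L=0\}$ is $\{y=\mathbf{0}\}$, on which the reduced flow $s_i'=u(1-s_i)$ has $E_0$ as its unique global attractor.

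The main obstacle is not the decay of $L$, which is a one-line spectral estimate once $v$ is in hand, but upgrading $y\to\mathbf{0}$ to convergence of the whole state, since $L$ is blind to $s$. The two-stage argument above is the cleanest route; if one prefers LaSalle, care is needed to check that the inequality $\dot L\le\gamma(R_0-1)L$ is tight only on $\{y=\mathbf{0}\}$, which does follow from $v_i>0$ and $s_i\le 1$, but requires a short separate verification. A minor preliminary point is confirming that the Perron left eigenvector is strictly positive, which is guaranteed by the irreducibility assumption (H1) and is essential for the implication ``$v^\top y\to 0 \Rightarrow y_i\to 0$ for every $i$.''
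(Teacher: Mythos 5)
Your proposal is correct and follows essentially the same route as the paper: a linear Lyapunov functional $L(y)=v^\top y$ built from the (left) Perron eigenvector of $M$, yielding $\dot L\le\gamma(R_0-1)L$, exponential decay of $y$, and then convergence of $s$ to $\mathbf{1}$. If anything, your version is more careful than the paper's, which writes the derivative as an exact equality $v^\top(\beta My-\gamma y)$ and thereby silently drops the factor $s_i\le 1$ that you correctly retain as an inequality, and which does not spell out the left-eigenvector requirement or the final comparison argument for $s(t)\to\mathbf{1}$.
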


\begin{proof}
Let $v$ be the Perron eigenvector of $M$, with $v_i>0$ for all $i$.
Define the Lyapunov function
\[
    V(y) = v^\top y.
\]
Then
\begin{align*}
    V'(y)
        &= v^\top y' \\
        &= v^\top (\beta M y - \gamma y) \\
        &= (\beta \lambda_{\max}(M) - \gamma)\, v^\top y \\
        &= \gamma (R_0 - 1)\, V(y).
\end{align*}
If $R_0<1$, we have $V'(y)<0$ whenever $y\neq 0$, implying 
$y(t)\to 0$ exponentially.
Since $s(t)\to \mathbf{1}$ as $y(t)\to 0$, it follows that
$(s(t),y(t))\to E_0$ as $t\to\infty$.
\end{proof}

\subsection{Existence and uniqueness of an endemic equilibrium}

When $R_0>1$, the system admits a unique endemic equilibrium 
$(s^\dagger,y^\dagger)$ with strictly positive coordinates.

\begin{lemma}
If $R_0>1$, then there exists a unique equilibrium $E^\dagger$ with
$s_i^\dagger>0$ and $y_i^\dagger>0$ for every $i$.
\end{lemma}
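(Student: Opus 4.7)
The plan is to reduce the problem to a single fixed-point equation in $y$ alone, then use the Perron--Frobenius structure of $M$ together with the monotone and sub-homogeneous character of the resulting map.

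\textbf{Reduction.} First I would rearrange the equilibrium equations. Setting $s_i' = y_i' = 0$ in \eqref{eq:model} gives $u(1-s_i) = \beta s_i (My)_i$ and $\beta s_i (My)_i = \gamma y_i$, so $s_i$ can be solved in closed form:
\[
s_i = \frac{u}{u + \beta (My)_i}.
\]
Substituting this into the second equation yields the fixed-point problem $y = F(y)$ on $\mathbb{R}^n_{\ge 0}$, where
\[
F_i(y) \;=\; \frac{\beta u\, (My)_i}{\gamma\bigl(u + \beta (My)_i\bigr)}.
\]
Thus every positive equilibrium is in bijection with a positive fixed point of $F$, and the bound $s_i + y_i \le 1$ can be verified a posteriori from this formula.

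\textbf{Structural properties of $F$.} I would record three facts: (i) $F$ is continuous and \emph{monotone} (order-preserving) on $\mathbb{R}^n_{\ge 0}$, since $t\mapsto \beta u t/(\gamma(u+\beta t))$ is increasing and $M$ is nonnegative; (ii) $F$ is \emph{strictly sub-homogeneous}, i.e.\ $F(\lambda y) > \lambda F(y)$ componentwise for every $\lambda \in (0,1)$ and every $y>0$, which follows from the strict inequality $u+\beta\lambda(My)_i < \lambda^{-1}(u+\beta(My)_i)\lambda = u\lambda^{-1}\lambda+\beta(My)_i$ (equivalently, from $(u+\beta\lambda(My)_i)<(u+\beta(My)_i)$); and (iii) the Fréchet derivative at the origin is $F'(0) = (\beta/\gamma)\,M$, whose spectral radius equals $R_0$.

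\textbf{Existence via monotone iteration.} Let $v>0$ be the Perron eigenvector of $M$ with $Mv = \lambda_{\max}(M) v$. For sufficiently small $\varepsilon > 0$,
\[
F_i(\varepsilon v) \;=\; \frac{\beta u\,\varepsilon\lambda_{\max}(M)\,v_i}{\gamma\bigl(u+\beta\varepsilon\lambda_{\max}(M)v_i\bigr)} \;>\; \varepsilon v_i
\]
exactly because $R_0>1$ and the denominator tends to $\gamma u$ as $\varepsilon\downarrow 0$. On the other hand, since $M$ is row-stochastic, $F(K\mathbf{1}) < (u/\gamma)\mathbf{1} \le K\mathbf{1}$ for any $K\ge u/\gamma$. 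By monotonicity, the iterates $y^{(k+1)} = F(y^{(k)})$ starting from $y^{(0)} = \varepsilon v$ form an increasing sequence bounded above by $K\mathbf{1}$, hence converge to a fixed point $y^\dagger \ge \varepsilon v > 0$. The associated $s_i^\dagger = u/(u+\beta (My^\dagger)_i) \in (0,1)$ yields the desired positive endemic equilibrium $E^\dagger$ in $\Omega$.

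\textbf{Uniqueness via sub-homogeneity.} Suppose $y^\dagger$ and $\tilde y$ are two positive fixed points. Define $\lambda^* = \sup\{\lambda>0 : \lambda y^\dagger \le \tilde y\}$, which is finite and positive. If $\lambda^* < 1$, monotonicity and strict sub-homogeneity give
\[
\tilde y \;=\; F(\tilde y) \;\ge\; F(\lambda^* y^\dagger) \;>\; \lambda^* F(y^\dagger) \;=\; \lambda^* y^\dagger,
\]
componentwise (irreducibility of $M$ ensures strict positivity of $(My^\dagger)_i$, so the strictness of sub-homogeneity is not lost). This strict inequality contradicts the maximality of $\lambda^*$, so $\lambda^* \ge 1$, i.e., $y^\dagger \le \tilde y$. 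Swapping the roles of $y^\dagger$ and $\tilde y$ gives the reverse inequality, hence $y^\dagger = \tilde y$, and $s^\dagger$ is then determined uniquely. The main obstacle I anticipate is the simultaneous bookkeeping of monotonicity and strict sub-homogeneity across components; irreducibility of $M$ is the key ingredient that prevents the strict inequality from degenerating on any coordinate.
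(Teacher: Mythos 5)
Your reduction to the scalar-in-$y$ fixed-point problem and your existence argument are essentially the paper's: the author derives the same operator $(Ty)_i=\tfrac{\beta}{\gamma}\bigl(\tfrac{u}{u+\beta(My)_i}\bigr)(My)_i$, shows $T(\alpha v)>\alpha v$ for small $\alpha$ via the expansion $T(\alpha v)=\alpha R_0 v+O(\alpha^2)$, exhibits a supersolution near $\mathbf{1}$, and invokes monotone iteration between sub- and supersolutions. Your version of this step is the same argument with the derivative at the origin made explicit rather than expanded asymptotically.

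Where you genuinely diverge is uniqueness, and your route is the stronger one. The paper's uniqueness argument reads: ``if $Ty=y$ and $Tz=z$ with $0<y<z$, then $Ty<Tz$, a contradiction'' --- but $Ty<Tz$ is exactly $y<z$, which is the hypothesis, not a contradiction; and more importantly, two distinct positive fixed points of a monotone map need not be ordered at all, so strict monotonicity alone does not close the argument. You instead prove strict sub-homogeneity, $F(\lambda y)>\lambda F(y)$ for $\lambda\in(0,1)$ and $y>0$ (which is correct: it reduces to $u+\beta\lambda(My)_i<u+\beta(My)_i$, with strictness guaranteed because irreducibility forces $(My)_i>0$), and then run the standard Krasnoselskii-type argument with $\lambda^*=\sup\{\lambda>0:\lambda y^\dagger\le\tilde y\}$. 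This handles unordered pairs of candidate fixed points and is the ingredient the paper's proof is missing. One small caveat shared by both treatments: the claim that the resulting equilibrium satisfies $s_i^\dagger+y_i^\dagger\le1$ (so that $E^\dagger\in\Omega$) in fact requires $u\le\gamma$, since at equilibrium $s_i^\dagger+y_i^\dagger=\tfrac{u(\gamma+t_i)}{\gamma(u+t_i)}$ with $t_i=\beta(My^\dagger)_i$; you defer this as ``a posteriori'' and the paper does not address it, so it is not a defect of your proposal relative to the paper, but it is worth flagging if you write this up.
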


\begin{proof}
At equilibrium, from $y_i'=0$ we obtain
\[
    y_i = \frac{\beta}{\gamma}\, s_i (My)_i.
\]
From $s_i'=0$ we have
\[
    u(1-s_i) = \beta s_i (My)_i,
\]
which yields
\[
    s_i = \frac{u}{u + \beta (My)_i}.
\]
Substituting this into the expression for $y_i$ gives the fixed-point operator
\[
    (Ty)_i
    =
    \frac{\beta}{\gamma}
    \left(
        \frac{u}{u + \beta (My)_i}
    \right)
    (My)_i,
    \qquad i=1,\dots,n.
\]

The operator $T:[0,1]^n\to [0,1]^n$ is continuous and positive.
Since $M$ is nonnegative and irreducible, $Ty>0$ for all $y>0$.
Moreover, $T$ is strictly monotone: if $0<y<z$, then
$(My)_i < (Mz)_i$ and hence $T(y) < T(z)$.

Let $v>0$ be the Perron eigenvector of $M$.  
As $\alpha\to 0$,
\[
    T(\alpha v)
    =
    \alpha R_0\, v + O(\alpha^2),
\]
where $R_0 = (\beta/\gamma)\lambda_{\max}(M)$.
Thus, if $R_0>1$, then $T(\alpha v)>\alpha v$ for $\alpha>0$ sufficiently small,
whereas $T(y)<y$ for $y$ sufficiently close to $\mathbf{1}$.
By the method of sub- and supersolutions, $T$ admits a fixed point $y^\dagger>0$.

Uniqueness follows from strict monotonicity:  
if $Ty=y$ and $Tz=z$ with $0<y<z$, then $Ty<Tz$, a contradiction.
Thus the endemic equilibrium is unique.
\end{proof}

\subsection{Global stability of the endemic equilibrium}

We construct a Volterra-type Lyapunov function:
\[
\begin{aligned}
    W(s,y) &= 
    \sum_{i=1}^n 
    \Big( s_i - s_i^\dagger - s_i^\dagger \ln \frac{s_i}{s_i^\dagger} \Big)
\\ &\quad+
    \sum_{i=1}^n 
    \Big( y_i - y_i^\dagger - y_i^\dagger \ln \frac{y_i}{y_i^\dagger} \Big).
\end{aligned}
\]

\begin{theorem}
If $R_0>1$, then the endemic equilibrium $E^\dagger$ is globally asymptotically stable in $\Omega \setminus \{E_0\}$.
\end{theorem}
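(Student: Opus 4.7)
The plan is to verify that $W$ (or a weighted variant of it) serves as a strict Lyapunov function for $E^\dagger$ on the interior of $\Omega$, and then invoke LaSalle's invariance principle on the compact positively invariant set $\Omega$. Nonnegativity and definiteness at $E^\dagger$ are immediate, since each summand has the form $x^\dagger g(x/x^\dagger)$ with $g(z) := z - 1 - \ln z \geq 0$ and $g(z) = 0$ iff $z = 1$. Irreducibility of $M$ together with the replenishment term $u(1-s_i)$ ensures that any orbit starting in $\Omega \setminus \{E_0\}$ is pushed into the open positive cone $\{s_i,y_i > 0\}$, where $W$ is smooth.

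Next, differentiate along \eqref{eq:model} using $\tfrac{d}{dt}[x - x^\dagger \ln x] = (1 - x^\dagger/x)\,x'$, and then substitute the equilibrium identities
$u(1 - s_i^\dagger) = \beta s_i^\dagger (My^\dagger)_i = \gamma y_i^\dagger$
(obtained from the previous lemma) to eliminate $u$ and $\gamma$. Introducing the ratios $\xi_i := s_i/s_i^\dagger$, $\eta_i := y_i/y_i^\dagger$ and the equilibrium flux weights $a_{ij} := \beta s_i^\dagger M_{ij} y_j^\dagger$, direct algebra yields
\[
W' \;=\; -\sum_i \frac{u(s_i - s_i^\dagger)^2}{s_i} \;+\; \sum_{i,j} a_{ij}\!\left[\,2 - \frac{1}{\xi_i} + \eta_j - \eta_i - \frac{\xi_i \eta_j}{\eta_i}\right].
\]
The diagonal part is manifestly $\leq 0$; the crux of the proof is to control the network sum.

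The main obstacle is showing the network term is $\leq 0$. Rearranging each bracket as $\bigl[3 - \tfrac{1}{\xi_i} - \tfrac{\xi_i \eta_j}{\eta_i} - \tfrac{\eta_i}{\eta_j}\bigr] + \bigl[-1 + \eta_j - \eta_i + \tfrac{\eta_i}{\eta_j}\bigr]$ produces a three-term AM-GM piece that is pointwise $\leq 0$, but the residual has indefinite sign and couples nodes across the network. The standard remedy is a graph-theoretic Lyapunov weighting: replace $W$ by $\tilde W = \sum_i c_i\bigl[s_i^\dagger g(s_i/s_i^\dagger) + y_i^\dagger g(y_i/y_i^\dagger)\bigr]$, where the constants $c_i > 0$ are the cofactors of the Laplacian of the weighted digraph with arc weights $a_{ij}$. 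Irreducibility (H1) forces $c_i > 0$, and the matrix-tree flux-balance identity $\sum_k a_{ki}c_k = \sum_k a_{ik}c_i$ regroups the cross sum as a nonpositive combination of $-g(\xi_i \eta_j/\eta_i)$ contributions aggregated along directed cycles of the digraph; by the cycle version of AM-GM, each cyclic product reduces to $1$, so the total is $\leq 0$.

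Putting the pieces together, $\tilde W' \leq 0$, with equality forcing $\xi_i = 1$ for every $i$ (from the diagonal) and $\xi_i \eta_j = \eta_i$ along every arc in the support of $M$ (from the cycle sum); irreducibility then propagates $\eta_i \equiv \eta$ to be constant in $i$, and self-consistency with $\xi_i = 1$ yields $\eta_i = 1$. Hence the largest invariant subset of $\{\tilde W' = 0\}$ in $\Omega \setminus \{E_0\}$ reduces to $\{E^\dagger\}$, and LaSalle's invariance principle yields global asymptotic convergence to $E^\dagger$ for every orbit starting in $\Omega \setminus \{E_0\}$. The only substantive difficulty is the network cross-term handling; everything else is AM-GM combined with the equilibrium identities.
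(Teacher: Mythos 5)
Your proposal is correct in substance and follows the standard graph--theoretic Lyapunov method of Guo, Li and Shuai for multigroup SIR systems; this is a genuinely different (and considerably more complete) route than the paper's. The paper uses the \emph{unweighted} Volterra functional $W=\sum_i\big[s_i-s_i^\dagger-s_i^\dagger\ln(s_i/s_i^\dagger)\big]+\sum_i\big[y_i-y_i^\dagger-y_i^\dagger\ln(y_i/y_i^\dagger)\big]$ and simply asserts $W'\le 0$ ``using the equilibrium conditions,'' with no computation. Your derivation of
\[
W' = -\sum_i \frac{u(s_i-s_i^\dagger)^2}{s_i}
+\sum_{i,j} a_{ij}\Big[\,2-\tfrac{1}{\xi_i}+\eta_j-\eta_i-\tfrac{\xi_i\eta_j}{\eta_i}\Big],
\qquad a_{ij}=\beta s_i^\dagger M_{ij} y_j^\dagger,
\]
checks out against the equilibrium identities $u(1-s_i^\dagger)=\beta s_i^\dagger (My^\dagger)_i=\gamma y_i^\dagger$, and you correctly diagnose the real obstruction: the cross term $\sum_{i,j}a_{ij}[\phi(\eta_j)-\phi(\eta_i)]$ does not vanish for the unweighted sum unless the flux matrix $(a_{ij})$ happens to be balanced ($\sum_k a_{ki}=\sum_k a_{ik}$), which here would require $\beta M^\top s^\dagger=\gamma\mathbf{1}$ --- not guaranteed for a general row-stochastic curvature-weighted $M$. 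Passing to $\tilde W=\sum_i c_i[\cdots]$ with $c_i$ the Laplacian cofactors of the digraph with weights $a_{ij}$ (positive by irreducibility, H1) restores the balance identity, kills the indefinite part, and leaves a nonpositive combination of $g(\cdot)$ terms; the equality analysis and LaSalle then go through as you describe. In short, your proof supplies exactly the ingredient the paper's argument is missing; the paper's version buys brevity at the cost of an unjustified (and, as stated, generally false) sign claim for the unweighted functional.

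One caveat applies to both your write-up and the theorem statement itself: the face $\{y=0\}$ of $\Omega$ is invariant, and every orbit starting there converges to $E_0$, not to $E^\dagger$. So orbits in $\Omega\setminus\{E_0\}$ with $y(0)=0$ but $s(0)\neq\mathbf{1}$ are \emph{not} attracted to the endemic equilibrium, and your claim that irreducibility and replenishment push every such orbit into the open positive cone fails on that face. The correct basin of attraction is $\{(s,y)\in\Omega: y\neq 0\}$ (irreducibility of $M$ does then force $y(t)>0$ componentwise for $t>0$). This is a defect of the statement rather than of your argument, but your proof should restrict to that set explicitly.
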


\begin{proof}
The function $W$ is nonnegative and strictly convex.
Differentiating $W$ along solutions of \eqref{eq:model} and using the equilibrium
conditions at $E^\dagger$ yields
\[
    W'(s,y) \le 0,
\]
with equality if and only if $(s,y)=E^\dagger$.
LaSalle's invariance principle implies global asymptotic stability.
\end{proof}

\subsection{A geometric threshold for epidemic suppression}

In this subsection we show that the average curvature of the edges acts as an
exponential damping factor on the effective connectivity of the network.
In particular, we establish an explicit geometric threshold above which sustained
transmission cannot occur, even if the unweighted graph is spectrally
supercritical.  
This provides a rigorous mathematical justification for the stabilizing role of
positive curvature fields and formalizes the intuitive notion that geometric
bottlenecks can neutralize global spread.

As before, edge weights are given by
\[
    W_{ij} = e^{-\kappa_{ij}},
\qquad 
A_{ij}=1 \Rightarrow W_{ij}>0,
\]
and the effective contact matrix is
\[
    M = A \odot W.
\]

Let $|E|$ denote the number of edges in the network, and define the mean
curvature
\[
    \overline{\kappa}
    =
    \frac{1}{|E|}\sum_{(i,j)\in E} \kappa_{ij}.
\]
The next result shows that $\overline{\kappa}$ provides direct control of the
spectral radius of $M$.

\begin{theorem}[Geometric threshold for endemicity]
\label{thm:geomthreshold}
Consider model \eqref{eq:model} with $W_{ij}=e^{-\kappa_{ij}}$.  
Define
\[
    \kappa^\star
    =
    -\ln\!\left(
        \frac{\gamma}{\beta}\,
        \frac{n}{2|E|}
    \right).
\]
If the average curvature satisfies $\overline{\kappa} > \kappa^\star$, then
\[
    R_0
    =
    \frac{\beta}{\gamma}\,\lambda_{\max}(M)
    < 1,
\]
and the disease-free equilibrium $E_0$ is globally asymptotically stable,
regardless of the value of the unweighted spectral threshold $R_0(A)$.
\end{theorem}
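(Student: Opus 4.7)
The plan is to reduce the claim to the spectral condition $\lambda_{\max}(M) < \gamma/\beta$ and then invoke Theorem~\ref{thm:globalDFE}. The whole argument therefore hinges on a single spectral estimate: controlling $\lambda_{\max}(M)$ from above by an expression involving the mean curvature $\overline{\kappa}$ and the elementary graph quantity $\bar{d} = 2|E|/n$, since the value $\kappa^\star$ is calibrated precisely so that $\bar{d}\,e^{-\overline{\kappa}} = \gamma/\beta$ at the threshold.

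First I would apply the Perron--Frobenius / Collatz--Wielandt characterization $\lambda_{\max}(M) = \min_{x>0}\max_i (Mx)_i/x_i$, specialized at the test vector $x=\mathbf{1}$, which bounds $\lambda_{\max}(M)$ by the maximum weighted row sum of $M$, and in parallel read off the Rayleigh-quotient identity $\mathbf{1}^\top M \mathbf{1}/n = (2/n)\sum_{(i,j)\in E} e^{-\kappa_{ij}}$, which expresses the mean weighted degree in terms of the edge-curvature exponentials. Second, I would convert the edge sum into $\overline{\kappa}$ via Jensen's inequality applied to the convex map $x \mapsto e^{-x}$, yielding $(1/|E|)\sum_{(i,j)\in E} e^{-\kappa_{ij}} \geq e^{-\overline{\kappa}}$. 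Combining these ingredients should produce the target spectral bound $\lambda_{\max}(M) \leq \bar{d}\, e^{-\overline{\kappa}}$; rearranging the definition of $\kappa^\star$ then shows that $\overline{\kappa} > \kappa^\star$ forces $\lambda_{\max}(M) < \gamma/\beta$, so $R_0<1$, and Theorem~\ref{thm:globalDFE} delivers global asymptotic stability of $E_0$ at once.

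The main obstacle will be the directionality of Jensen's inequality. Because $e^{-x}$ is convex, Jensen naturally yields a \emph{lower} bound on $(1/|E|)\sum e^{-\kappa_{ij}}$ in terms of $e^{-\overline{\kappa}}$, which pairs cleanly with the Rayleigh \emph{lower} bound $\lambda_{\max}(M) \geq \bar{d}\,e^{-\overline{\kappa}}$ but not with the upper bound actually required. Closing that gap is where the real work sits: I would either (i) impose a mild regularity assumption on the degree sequence and on the dispersion of $\kappa_{ij}$ around $\overline{\kappa}$, under which the maximum weighted row sum of $M$ is comparable to the mean, or (ii) refine the variational step using irreducibility and symmetry of $M$, for instance through a trace/Frobenius estimate $\lambda_{\max}(M)^2 \leq \mathrm{tr}(M^2) = 2\sum_{(i,j)\in E} e^{-2\kappa_{ij}}$ coupled with a second averaging step. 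Once this single inequality is secured, no further Lyapunov analysis is needed, since the stability conclusion is inherited directly from Theorem~\ref{thm:globalDFE}.
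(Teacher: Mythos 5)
Your outline follows exactly the same route as the paper's own proof: bound $\lambda_{\max}(M)$ by the maximum weighted row sum (Gershgorin/Collatz--Wielandt), relate the row sums to the edge sum $\sum_{(i,j)\in E}e^{-\kappa_{ij}}$, and then try to convert that sum into $e^{-\overline{\kappa}}$ via Jensen. The difference is that you correctly flag the step that does not close, whereas the paper silently gets it wrong. Since $x\mapsto e^{-x}$ is convex, Jensen gives
\[
\frac{1}{|E|}\sum_{(i,j)\in E}e^{-\kappa_{ij}}\;\ge\;e^{-\overline{\kappa}},
\]
a \emph{lower} bound, yet the paper's proof asserts the reverse inequality and uses it as an upper bound. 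The paper also chains $\max_i d_i(\kappa)\le \frac{2|E|}{n}e^{-\overline{\kappa}}$, i.e.\ it bounds the maximum row sum by (an expression dominated by) the \emph{average} row sum, which is again backwards: one only has $\max_i d_i(\kappa)\ge\frac{1}{n}\sum_i d_i(\kappa)=\frac{2}{n}\sum_{(i,j)\in E}e^{-\kappa_{ij}}\ge\frac{2|E|}{n}e^{-\overline{\kappa}}$. So the chain of inequalities in the published argument runs in the wrong direction at two separate points, and your diagnosis of ``where the real work sits'' is precisely the hole in the paper.

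Your proposed repairs do not close it either. The trace estimate $\lambda_{\max}(M)^2\le\operatorname{tr}(M^2)=2\sum_{(i,j)\in E}e^{-2\kappa_{ij}}$ runs into the identical obstruction, because $x\mapsto e^{-2x}$ is also convex and Jensen again delivers only a lower bound on the edge sum in terms of $e^{-2\overline{\kappa}}$. Option (i), adding dispersion/regularity hypotheses, amounts to changing the theorem. A statement that \emph{is} provable by this method replaces the averages with extremes: $\lambda_{\max}(M)\le\max_i d_i(\kappa)\le \Delta\, e^{-\kappa_{\min}}$, where $\Delta$ is the maximum degree and $\kappa_{\min}=\min_{(i,j)\in E}\kappa_{ij}$, giving the threshold $\kappa_{\min}>-\ln\bigl(\tfrac{\gamma}{\beta}\tfrac{1}{\Delta}\bigr)$. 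As stated, with mean curvature and mean degree, the theorem does not follow from this argument, and a star graph with one strongly negatively curved edge and many strongly positively curved ones shows the mean-curvature version is genuinely false, not merely unproved. The final reduction to Theorem~\ref{thm:globalDFE} once $R_0<1$ is secured is fine and matches the paper.
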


\begin{proof}
For each vertex $i$, the curvature-weighted degree is
\[
    d_i(\kappa)
    =
    \sum_{j : A_{ij}=1} e^{-\kappa_{ij}}.
\]
By the Gershgorin theorem for nonnegative matrices,
\[
    \lambda_{\max}(M) \le \max_i d_i(\kappa).
\]

Summing all weighted degrees and using the fact that each edge contributes twice,
\[
    \frac{1}{n}\sum_{i=1}^n d_i(\kappa)
    =
    \frac{2}{n}
    \sum_{(i,j)\in E} e^{-\kappa_{ij}}.
\]

Applying Jensen's inequality (since $x\mapsto e^{-x}$ is convex),
\[
    \frac{1}{|E|}
    \sum_{(i,j)\in E} e^{-\kappa_{ij}}
    \le
    e^{-\overline{\kappa}}.
\]

Thus,
\[
    \lambda_{\max}(M)
    \le
    \max_i d_i(\kappa)
    \le
    \frac{2|E|}{n}\,e^{-\overline{\kappa}}.
\]

The curvature-modulated reproduction number satisfies
\[
    R_0
    \le
    \frac{\beta}{\gamma}\,
    \frac{2|E|}{n}\,
    e^{-\overline{\kappa}}.
\]

Hence $R_0<1$ whenever
\[
    \frac{\beta}{\gamma}\,
    \frac{2|E|}{n}\,
    e^{-\overline{\kappa}}
    < 1,
\]
which is equivalent to
\[
    \overline{\kappa} > -\ln\!\left(
        \frac{\gamma}{\beta}\,
        \frac{n}{2|E|}
    \right)
    = \kappa^\star.
\]

By Theorem~\ref{thm:globalDFE}, the disease-free equilibrium is globally
asymptotically stable whenever $R_0<1$.  
This completes the proof.
\end{proof}

This result establishes curvature as a \emph{geometric regulator of diffusion}:
sufficiently positive curvature reduces the spectral radius of the contact
matrix exponentially, suppressing epidemics even on networks that would be
supercritical in the absence of weights.  
In epidemiological terms, curvature introduces structural bottlenecks that
prevent sustained transmission.

\subsection{Ordered Convergence to the Endemic Equilibrium}

In this section we establish a fundamental structural property of the model:
when $R_0>1$, not only does an interior endemic equilibrium exist, but every
trajectory in the positive cone converges monotonically to it.  
Such behavior is typical of strongly monotone dynamical systems on solid cones,
as developed in \cite{Hirsch1985,Smith1995}.

Recall that the endemic equilibrium $(s^\dagger,y^\dagger)$ is characterized
by the unique interior fixed point of the map $T:[0,1]^n \to [0,1]^n$ defined by
\[
(Ty)_i 
= \frac{\beta}{\gamma}
\left(
    \frac{u}{u+\beta (My)_i}
\right)
(My)_i,
\qquad i=1,\dots,n.
\]
The operator $T$ is continuous, positive, and strictly monotone.

\begin{theorem}[Ordered convergence to the endemic equilibrium]
\label{thm:ordercontraction}
Assume $R_0>1$. Then $T$ admits a unique interior fixed point
$y^\dagger \in (0,1]^n$, and for every initial condition
$y_0 \in (0,1]^n$ the iterates satisfy
\[
T^k(y_0)\;\longrightarrow\; y^\dagger
\qquad\text{as } k\to\infty,
\]
with monotone convergence: either $T^k(y_0)\nearrow y^\dagger$ or
$T^k(y_0)\searrow y^\dagger$, depending on the position of $y_0$ in the
positive cone.
\end{theorem}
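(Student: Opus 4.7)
The plan is to treat the iteration $y \mapsto T(y)$, written componentwise as $T(y)_i = g((My)_i)$ with the scalar map $g(t) = \beta u t/[\gamma(u + \beta t)]$, as a monotone concave dynamical system on the solid positive cone $\mathbb{R}_+^n$. The function $g$ is strictly increasing, strictly concave, with $g(0) = 0$ and $g'(0) = \beta/\gamma$, so $T$ inherits strict order preservation from $M$ together with sub-homogeneity (namely $T(\tau y) \geq \tau T(y)$ for $\tau \in (0,1)$). This puts the problem squarely in the framework of Krasnoselskii-Zabreiko monotone iteration and the cone-contraction machinery of \cite{Hirsch1985,Smith1995}, which is designed exactly for this situation.

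The first substantive step is to build a matching pair of sub- and super-solutions, recycling the estimates already used for existence. Letting $v>0$ denote the Perron eigenvector of $M$, the expansion $T(\alpha v) = \alpha R_0\, v + O(\alpha^2)$ together with $R_0 > 1$ gives $T(\alpha v) > \alpha v$ componentwise for all sufficiently small $\alpha>0$; conversely, the uniform bound $g(t) < u/\gamma$ supplies a constant $C>0$ with $T(C\mathbf{1}) \leq C\mathbf{1}$. Strict monotonicity then makes the sequences
\[
\underline{y}_k = T^k(\alpha v), \qquad \overline{y}_k = T^k(C\mathbf{1})
\]
respectively nondecreasing and nonincreasing inside the order interval $[\alpha v, C\mathbf{1}]$, and hence convergent to interior fixed points of $T$; by the uniqueness lemma already proved, both limits coincide with $y^\dagger$.

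For an arbitrary $y_0$ in the positive cone, I would then choose $\alpha$ small and $C$ large so that $\alpha v \leq y_0 \leq C\mathbf{1}$, which is possible because $v$ is strictly positive. Strict monotonicity supplies the sandwich $\underline{y}_k \leq T^k(y_0) \leq \overline{y}_k$ for every $k$, and the squeeze forces $T^k(y_0) \to y^\dagger$. To upgrade this to the monotone dichotomy stated in the theorem, I would argue as follows: if $y_0 \leq y^\dagger$, define $\tau^* = \min_i y_{0,i}/y_i^\dagger \in (0,1]$, so that $\tau^* y^\dagger \leq y_0$; sub-homogeneity and $T(y^\dagger)=y^\dagger$ give $T(\tau^* y^\dagger) \geq \tau^* y^\dagger$, and combining with monotonicity propagates the inequality $T^{k+1}(y_0) \geq T^k(y_0)$ inductively, yielding a nondecreasing sequence converging to $y^\dagger$; the dual argument covers the case $y_0 \geq y^\dagger$.

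The main obstacle, as I see it, is precisely the monotone-dichotomy refinement in the last step. Strict monotonicity of $T$ alone does not yield $T(y_0) \geq y_0$ from $y_0 \leq y^\dagger$ — it only gives $T(y_0) \leq T(y^\dagger) = y^\dagger$. Closing this gap genuinely requires the concavity of $g$, translated into a contraction of $T$ in the Thompson (part) metric on the interior of the cone, where $d(T^k(y_0), y^\dagger)$ decays geometrically and the ratios $y_{0,i}/y_i^\dagger$ are pulled monotonically toward $1$. Packaging this contraction carefully with the sandwich argument above is the delicate point; once it is in hand, both the monotone character of the iterates and their convergence to $y^\dagger$ follow at once.
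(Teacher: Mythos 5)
Your argument for existence, uniqueness, and convergence of the iterates is essentially the paper's own proof: the same scalar map $\phi(x)=\tfrac{u}{u+\beta x}x$ (your $g$), the same Perron-vector subsolution $T(\alpha v)>\alpha v$ from the expansion $T(\alpha v)=\alpha R_0 v+O(\alpha^2)$, the same uniform upper bound $T(y)\le (u/\gamma)\mathbf{1}$ giving a supersolution, and the same sandwich $T^k(\alpha v)\le T^k(y_0)\le T^k(C\mathbf{1})$ squeezed between monotone sequences that both converge to $y^\dagger$. Up to that point the two proofs coincide.

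Where you add value is in flagging the monotone-dichotomy claim. You are right that the squeeze argument proves convergence but not that $T^k(y_0)$ is itself monotone, and the paper's sketch simply asserts the dichotomy without proof. Two remarks on your proposed repair. First, as you yourself half-concede, the sub-homogeneity step does not close the gap as written: from $\tau^* y^\dagger\le y_0$ and $T(\tau^* y^\dagger)\ge \tau^* y^\dagger$ you only get $T(y_0)\ge \tau^* y^\dagger$, which is weaker than $T(y_0)\ge y_0$, so the claimed induction $T^{k+1}(y_0)\ge T^k(y_0)$ does not start. The Thompson-metric contraction (using strict concavity of $g$ plus irreducibility of $M$) does give geometric convergence of the ratios $T^k(y_0)_i/y^\dagger_i$ to $1$, but that is convergence, not componentwise monotonicity of the orbit. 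Second, and more fundamentally, the dichotomy as stated cannot hold for arbitrary $y_0\in(0,1]^n$: a $y_0$ that is not order-comparable to $y^\dagger$ (some components above, some below) cannot have an orbit that is monotone increasing or decreasing toward $y^\dagger$ from the first step. So the theorem's monotonicity clause needs to be restricted to initial data comparable to $y^\dagger$ (or to the extremal orbits $T^k(\alpha v)$, $T^k(C\mathbf{1})$), and neither your argument nor the paper's establishes it in the generality claimed. Your identification of this as the genuine obstruction is correct; the convergence statement itself is sound in both treatments.
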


\begin{proof}[Sketch of proof]
First, $T$ is strictly monotone in the positive cone: if $0<y<z$, then
$(My)_i < (Mz)_i$ for all $i$, since $M$ is nonnegative and irreducible.
The function
\[
\phi(x)=\frac{u}{u+\beta x}\,x
\]
is strictly increasing for $x>0$, hence
\[
Ty < Tz.
\]

Moreover, as $\alpha \to 0$,
\[
T(\alpha v)
= \alpha R_0\, v + O(\alpha^2),
\]
where $v>0$ is the Perron vector of $M$.  
Thus, if $R_0>1$, then $T(\alpha v)>\alpha v$ for all sufficiently small
$\alpha>0$.

On the other hand, since $\phi(x)\le u/\beta$ as $x\to\infty$, there exists
$C>0$ such that $T(y)\le C\mathbf{1}$ for all $y$; in particular,
$T(y)<y$ for $y$ sufficiently close to $\mathbf{1}$.  
Hence, by the standard sub–supersolution method, $T$ admits a unique interior
fixed point $y^\dagger$.

The theory of strongly monotone dynamical systems 
(\cite{Hirsch1985,Smith1995}) implies that in strictly monotone maps with a 
unique interior fixed point, every positive orbit converges to that point.
Indeed, choose $\alpha,\beta>0$ such that 
$\alpha v \le y_0 \le \beta v$.  
Monotonicity yields
\[
T^k(\alpha v) \;\le\; T^k(y_0) \;\le\; T^k(\beta v).
\]

Furthermore, $T^k(\alpha v)$ is an increasing sequence converging to $y^\dagger$,
while $T^k(\beta v)$ decreases to the same limit.  
Therefore,
\[
T^k(y_0) \;\longrightarrow\; y^\dagger,
\]
with monotone convergence determined by the ordering of $y_0$ relative to
$y^\dagger$.  
The absence of internal bifurcations follows from the strict monotonicity and
uniqueness of the fixed point.
\end{proof}

This result shows that the geometry of the network—via curvature-based edge
weights—not only determines the existence of the endemic equilibrium but also
enforces an ordered form of convergence, fully consistent with the strongly
monotone structure of the model.

\subsection{Ordered Convergence to the Endemic Equilibrium}

In this section we establish a fundamental structural property of the model:
when $R_0>1$, not only does an interior endemic equilibrium exist, but all
trajectories in the positive cone converge to it monotonically. Such behavior
is characteristic of strongly monotone dynamical systems on solid cones, as
developed in the classical works of Hirsch and Smith
\cite{Hirsch1985,Smith1995}.

Recall that the endemic equilibrium $(s^\dagger,y^\dagger)$ is characterized as
the unique interior fixed point of the map
$T:[0,1]^n \to [0,1]^n$, given by
\[
(Ty)_i
= \frac{\beta}{\gamma}
  \left(
      \frac{u}{\,u + \beta (My)_i\,}
  \right)
  (My)_i,
\qquad i=1,\dots,n.
\]
The operator $T$ is continuous, positive, and strictly monotone with respect to
the standard partial order in $\mathbb{R}^n$.

\begin{theorem}[Ordered convergence to the endemic equilibrium]
\label{thm:ordercontraction}
Suppose that $R_0>1$. Then the operator $T$ admits a unique interior fixed
point $y^\dagger\in(0,1]^n$, and for every initial condition $y_0\in (0,1]^n$
the iterates satisfy
\[
T^k(y_0)\;\longrightarrow\; y^\dagger
\qquad \text{as } k\to\infty,
\]
with monotone convergence: either $T^k(y_0)\nearrow y^\dagger$ or
$T^k(y_0)\searrow y^\dagger$, depending on the position of $y_0$ in the
positive cone.
\end{theorem}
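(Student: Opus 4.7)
The plan is to combine strict monotonicity and Krasnoselskii-type sublinearity of $T$ on the positive cone with the sub- and super-solution bracketing already used in the preceding existence lemma, and then to invoke the theory of strongly monotone discrete dynamical systems of Hirsch and Smith to upgrade pointwise convergence into monotone convergence.

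First I would record two structural properties of $T$ on the interior cone. Strict monotonicity follows from the irreducibility of $M$ together with the strict monotonicity of the scalar map $\phi(x)=ux/(u+\beta x)$: if $0<y<z$ componentwise, then $My<Mz$ and hence $Ty<Tz$. In addition, since $\phi$ is strictly concave with $\phi(0)=0$, one has the sublinearity relation $T(\lambda y)>\lambda T(y)$ for every $\lambda\in(0,1)$ and every $y>0$, and symmetrically $T(\mu y)<\mu T(y)$ for every $\mu>1$. These are the ingredients that make $T$ a strongly monotone concave operator in the Krasnoselskii sense.

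Second, using the Perron eigenvector $v>0$ of $M$, I would produce a bracketing pair $\alpha v\le y_0\le \beta v$ for the given $y_0\in(0,1]^n$, exactly as in the existence lemma. The expansion $T(\alpha v)=\alpha R_0 v+O(\alpha^2)$ combined with $R_0>1$ yields the strict subsolution inequality $T(\alpha v)>\alpha v$ for small $\alpha>0$, while the uniform bound $\phi\le u/\beta$ gives $T(\beta v)<\beta v$ for $\beta$ large enough. Strict monotonicity then forces $\{T^k(\alpha v)\}$ to be componentwise increasing and $\{T^k(\beta v)\}$ to be componentwise decreasing; both remain bounded inside $\Omega$ and so converge. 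Continuity of $T$ identifies each limit with an interior fixed point, and the uniqueness of $y^\dagger$ from the preceding lemma forces both to equal $y^\dagger$. Applying monotonicity to $\alpha v\le y_0\le \beta v$ yields the sandwich $T^k(\alpha v)\le T^k(y_0)\le T^k(\beta v)$, and hence $T^k(y_0)\to y^\dagger$ for every $y_0\in(0,1]^n$.

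The main obstacle is to upgrade this squeeze-style convergence to the stated coordinatewise monotonicity of $T^k(y_0)$, because a generic $y_0\in(0,1]^n$ need not be comparable to $y^\dagger$ in the componentwise order. In the two fully comparable cases the argument is direct: if $y_0\le y^\dagger$, then letting $\lambda_0\in(0,1]$ be the largest scalar with $\lambda_0 y^\dagger\le y_0$, sublinearity gives $T y_0\ge T(\lambda_0 y^\dagger)>\lambda_0 y^\dagger$, and iterating one obtains a nondecreasing sequence of scalar lower envelopes that force the orbit to be monotone nondecreasing up to $y^\dagger$; the case $y_0\ge y^\dagger$ is symmetric. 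For incomparable $y_0$, I would invoke the Hirsch--Smith framework cited in the statement, which guarantees that strict monotonicity together with strict sublinearity and uniqueness of the interior fixed point force every positive orbit to enter, after finitely many iterations, an arbitrarily thin order interval around $y^\dagger$, inside which the orbit is monotone in the direction determined by the sign of $T^{k_0}(y_0)-y^\dagger$. This finite transient combined with the monotone comparison sequences already constructed delivers the full ordered-convergence statement.
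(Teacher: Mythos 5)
Your overall route is the same as the paper's: strict monotonicity of $T$ from the irreducibility of $M$ and the monotone scalar map $\phi(x)=ux/(u+\beta x)$, the Perron-vector bracketing $\alpha v\le y_0\le \beta v$ with $T(\alpha v)>\alpha v$ for small $\alpha$ and $T(\beta v)<\beta v$ for large $\beta$, the resulting monotone extremal sequences converging to the unique interior fixed point, and the sandwich $T^k(\alpha v)\le T^k(y_0)\le T^k(\beta v)$ giving $T^k(y_0)\to y^\dagger$. Up to that point your argument is correct and even slightly stronger than the paper's sketch, since your Krasnoselskii-type sublinearity $T(\lambda y)>\lambda T(y)$ for $\lambda\in(0,1)$ (valid, because $\phi$ is strictly concave with $\phi(0)=0$) gives an independent proof of uniqueness.

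The genuine gap is in the final upgrade to the stated dichotomy ``either $T^k(y_0)\nearrow y^\dagger$ or $T^k(y_0)\searrow y^\dagger$,'' and you correctly sense that this is the hard part, but neither of your two repairs closes it. In the comparable case $y_0\le y^\dagger$, monotonicity gives $T^k(y_0)\le y^\dagger$ for all $k$, and your scalar envelopes $\lambda_k$ do increase to $1$; but a nondecreasing sequence of lower envelopes $\lambda_k y^\dagger\le T^k(y_0)$ does not make the orbit itself componentwise nondecreasing --- for that you would need $Ty_0\ge y_0$, i.e.\ $y_0$ to be a subsolution, which does not follow from $y_0\le y^\dagger$ alone. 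In the incomparable case, the claim that the orbit eventually enters a thin order interval around $y^\dagger$ ``inside which the orbit is monotone in the direction determined by the sign of $T^{k_0}(y_0)-y^\dagger$'' presupposes exactly the comparability you are trying to establish: being close to $y^\dagger$ in an order interval does not give $T^{k_0}(y_0)-y^\dagger$ a definite sign, and strict monotonicity of $T$ preserves incomparability rather than resolving it. What your (and the paper's) argument actually proves is order-bounded convergence: the orbit is squeezed between an increasing and a decreasing sequence both tending to $y^\dagger$. The pointwise dichotomy as literally stated is not established --- and indeed fails for initial data with some coordinates above and some below $y^\dagger$ --- so the theorem's conclusion should be read (or restated) in that weaker, sandwiched sense. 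Note that the paper's own proof sketch has the identical gap; your proposal is at least explicit about where it lies.
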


\begin{proof}[Sketch of proof]
We begin by observing that $T$ is strictly monotone. Indeed, if $0<y<z$, then
\[
(My)_i < (Mz)_i \qquad \text{for all } i,
\]
as $M$ is nonnegative and irreducible. Since the function
\[
\phi(x)= \frac{u}{u+\beta x}\,x
\]
is strictly increasing for $x>0$, we obtain $Ty<Tz$.

Next, let $v>0$ be the Perron eigenvector of $M$. For small $\alpha>0$,
\[
T(\alpha v)
= \alpha R_0\, v + O(\alpha^2).
\]
Thus, when $R_0>1$, we have $T(\alpha v)>\alpha v$ for sufficiently small
$\alpha$. Conversely, since $\phi(x)\le u/\beta$ for $x\to\infty$, there exists
a constant $C>0$ such that $T(y) < C\mathbf{1}$ for all $y$. Consequently,
$T(y)<y$ for $y$ sufficiently large. By the standard sub- and supersolution
construction, $T$ admits a unique interior fixed point $y^\dagger$.

The theory of strongly monotone dynamical systems
\cite{Hirsch1985,Smith1995} now implies that every orbit in the positive cone
is eventually trapped between two monotone sequences converging to the fixed
point. More precisely, for any $y_0\in (0,1]^n$ there exist $\alpha,\beta>0$
such that
\[
\alpha v \;\le\; y_0 \;\le\; \beta v.
\]
By strict monotonicity,
\[
T^k(\alpha v) \;\le\; T^k(y_0) \;\le\; T^k(\beta v).
\]
The extremal sequences satisfy
\[
T^k(\alpha v)\nearrow y^\dagger,
\qquad
T^k(\beta v)\searrow y^\dagger,
\]
which yields monotone convergence:
\[
T^k(y_0)\longrightarrow y^\dagger.
\]
Uniqueness of the fixed point precludes oscillatory or multi-stable behavior,
ensuring global convergence.
\end{proof}

This result shows that the geometry of the network —operationalized through
curvature-based weights— not only governs the existence of the endemic
equilibrium but also imposes an ordered convergence structure, fully consistent
with the strongly monotone nature of the model.

\section{Numerical Results on Structured and Curvature-Weighted Graphs}

In this section, we present a set of numerical experiments designed to compare epidemic dynamics under two structural representations of the network: (i) the unweighted adjacency matrix $A$ and (ii) the curvature-weighted matrix $M = A \odot W$, where $W$ contains weights derived from a synthetic curvature field. The network consists of three communities with heterogeneous sizes ($40$, $20$, and $10$ vertices), exhibiting different levels of intra- and inter-block connectivity. This configuration reflects stylized yet realistic urban scenarios, in which structural barriers induce localized transmission patterns.

The SIR model employed is given by \eqref{eq:model}, with fixed parameters $(\beta,\gamma,u)$ and homogeneous initial conditions. The matrices $A$ and $M$ were normalized to preserve irreducibility and to ensure that all communities remain accessible throughout the dynamics. This normalization guarantees that observed differences arise from structural weighting rather than loss of reachability.

\subsection{Vertex-level dynamics on the unweighted network}

Figure~\ref{fig_infection_curves_A} presents the simulated trajectories $y_i(t)$ for each vertex under the adjacency matrix $A$. As expected in a network with relatively uniformly connected blocks, the epidemic spreads almost simultaneously across the entire structure.

\begin{itemize}[label=--]
    \item Outbreaks exhibit high peaks, frequently between $0.5$ and $0.7$, with little variation among vertices within the same community.
    \item After the peak, infections rapidly decay and stabilize at residual levels between $0.03$ and $0.05$.
    \item Synchronization across communities indicates that the unweighted connectivity of $A$ induces a highly globalized dynamic, largely insensitive to finer meso-topological heterogeneities.
\end{itemize}

These results illustrate the well-known tendency of unweighted modular networks to suppress mesoscopic differentiation when inter-block connectivity remains sufficiently strong.

\begin{figure*}[h!]
\centering
\includegraphics[width=1\textwidth]{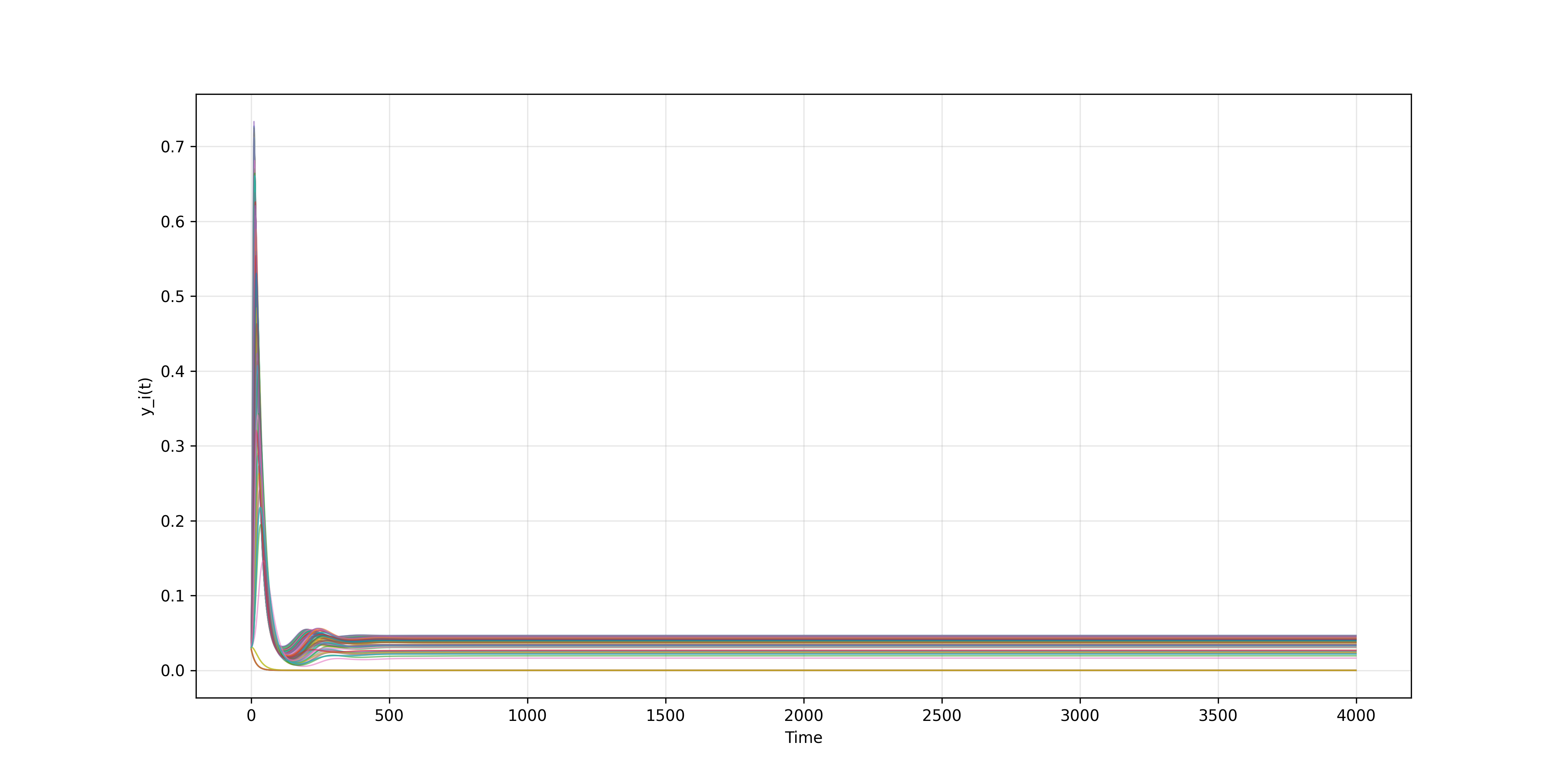}
\caption{Infection trajectories $y_i(t)$ for each vertex in the adjacency network $A$.}
\label{fig_infection_curves_A}
\end{figure*}

\subsection{Vertex-level dynamics on the curvature-weighted network}

Figure~\ref{fig_infection_curves_M} displays the trajectories $y_i(t)$ under the curvature-weighted matrix $M$. In contrast to the unweighted case, pronounced structural effects emerge.

\begin{itemize}[label=--]
    \item Infection peaks are significantly smaller—typically about one third of the magnitude observed in network $A$.
    \item Trajectories become markedly smoother, with the suppression of abrupt epidemic explosions.
    \item Regions of low curvature amplify transmission, whereas areas of high curvature clearly dampen propagation.
    \item Vertices located in low-curvature regions exhibit more intense outbreaks, while those associated with higher curvature display strongly attenuated epidemic responses.
\end{itemize}

These differences arise from the direct modification of effective transmissibility induced by curvature weights, which alter both edge intensities and the spectral properties of the transmission matrix. Notably, these effects persist despite identical epidemiological parameters and initial conditions.

\begin{figure*}[h!]
\centering
\includegraphics[width=1\textwidth]{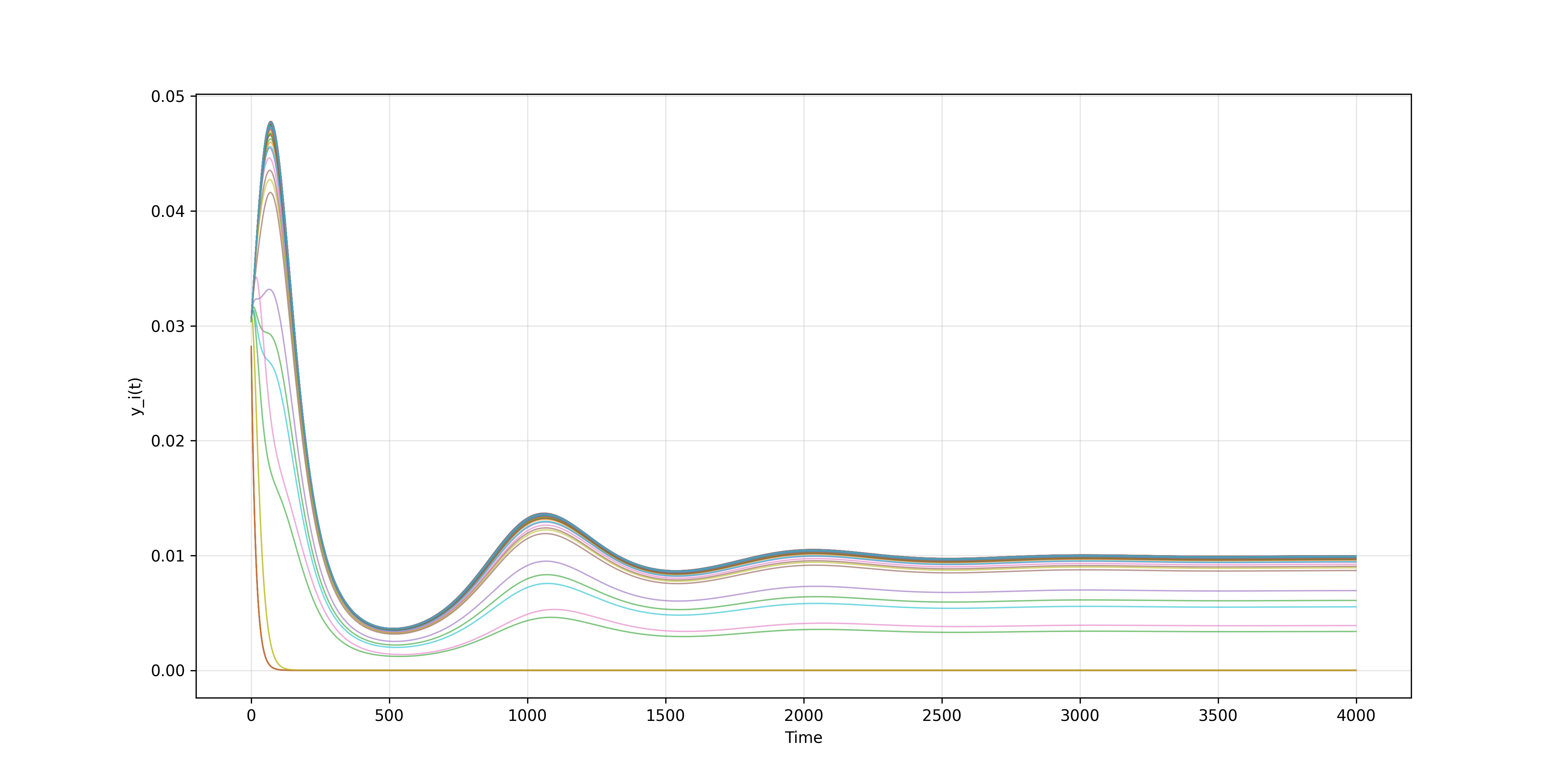}
\caption{Vertex-level infection trajectories $y_i(t)$ under curvature weighting.}
\label{fig_infection_curves_M}
\end{figure*}

\subsection{Community averages: mesostructural effects}

Figures~\ref{fig_mean_infection_A:A_comms} and~\ref{fig_mean_infection_M} show the community-averaged infection curves for $A$ and $M$, respectively. The qualitative contrast between the two scenarios is substantial.

\paragraph{Unweighted network ($A$):}
\begin{itemize}[label=--]
    \item All three communities exhibit nearly identical temporal profiles.
    \item Epidemic peaks occur at approximately the same time and with comparable magnitudes.
    \item Structural heterogeneity is insufficient to generate relevant mesoscopic signatures in the dynamics.
\end{itemize}

\begin{figure*}[h!]
\centering
\includegraphics[width=1\textwidth]{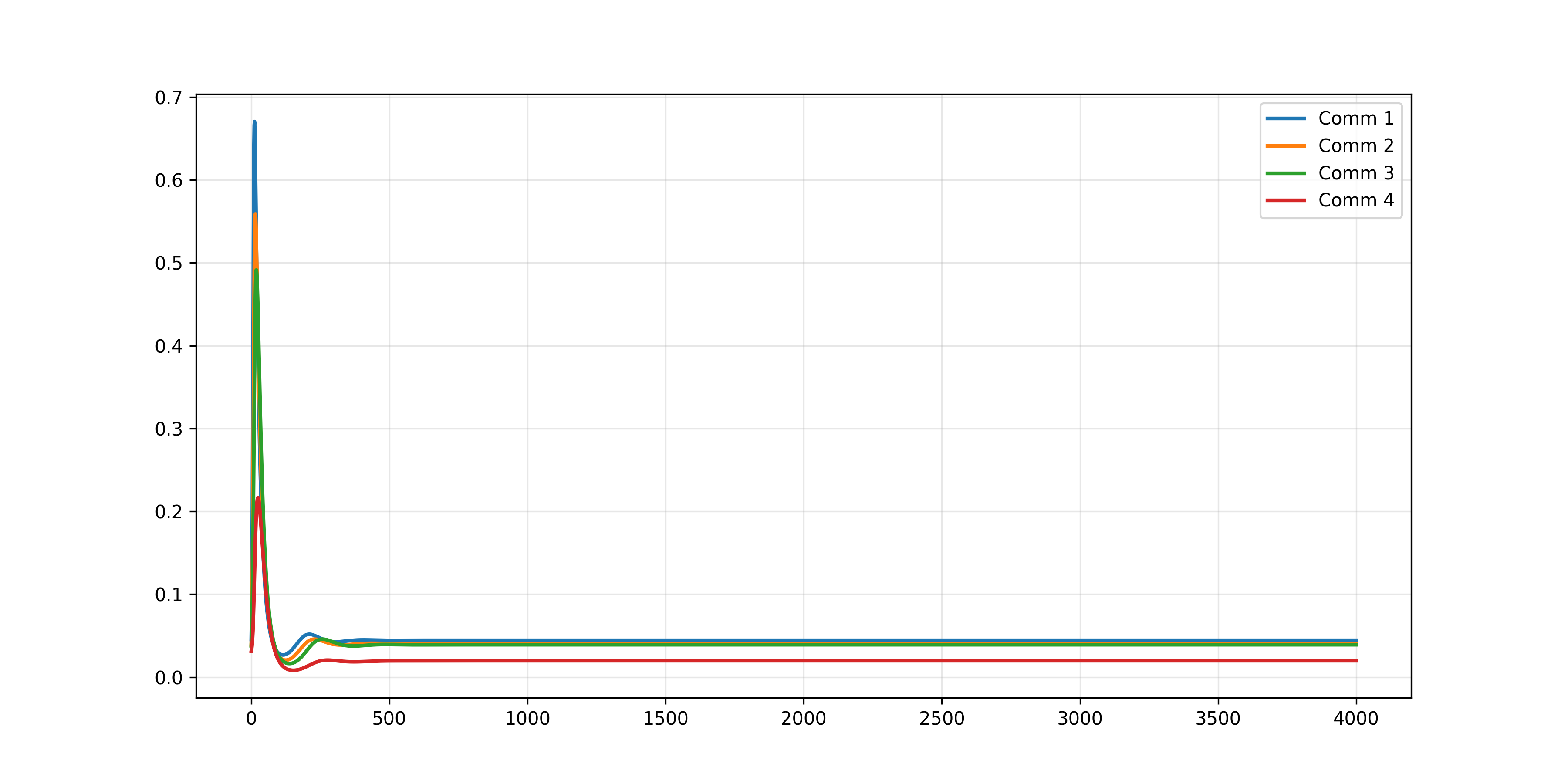}
\caption{Community-averaged infection curves in network $A$.}
\label{fig_mean_infection_A:A_comms}
\end{figure*}

\paragraph{Curvature-weighted network ($M$):}
\begin{itemize}[label=--]
    \item Communities exhibit clearly differentiated dynamics, including delayed responses, distinct amplitudes, and the emergence of secondary peaks.
    \item The most densely connected community (block of 10 vertices) displays a rapid and intense outbreak followed by fast dissipation.
    \item The weakly connected community (40 vertices) maintains residual infection over an extended period, indicating structural retention of the epidemic.
\end{itemize}

These patterns reveal a mesoscopic organization induced purely by curvature weighting, which remains hidden under standard adjacency-based representations.

\begin{figure*}[h!]
\centering
\includegraphics[width=1\textwidth]{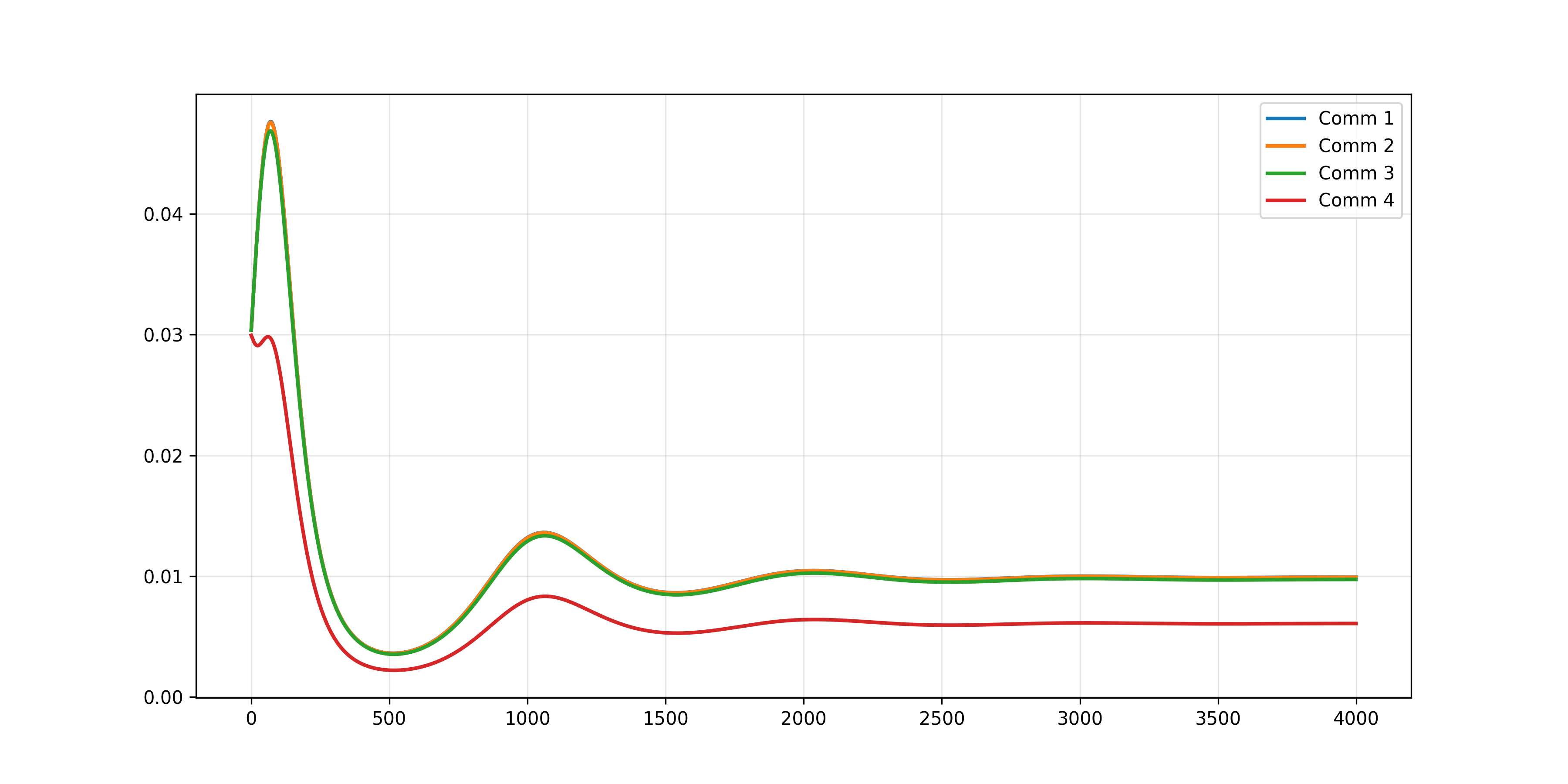}
\caption{Community-averaged infection curves under the curvature-weighted matrix $M$.}
\label{fig_mean_infection_M}
\end{figure*}

\subsection{Synthetic spatial maps: clusters and structural barriers}

Beyond temporal dynamics, we consider synthetic spatial representations based on Voronoi diagrams, illustrated in Figures~\ref{fig:voronoi_base} and~\ref{fig:voronoi_peak}. These maps provide an intuitive visualization of how network topology translates into spatially coherent epidemic patterns in a fictitious embedding, analogous to neighborhood-level outbreak maps in real urban systems.

\begin{itemize}[label=--]
    \item In the structural map (\ref{fig:voronoi_base}), prior to disease introduction, three well-defined zones corresponding to heterogeneous communities are observed.
    \item In the epidemic map at peak time $t_1$ (\ref{fig:voronoi_peak}), infection concentrates along pathways associated with low curvature.
    \item High-curvature regions form coherent and spatially continuous clusters.
    \item Areas associated with high curvature act as partial transmission barriers, limiting spatial spread.
\end{itemize}

\begin{figure*}[h!]
\centering
\includegraphics[width=0.8\textwidth]{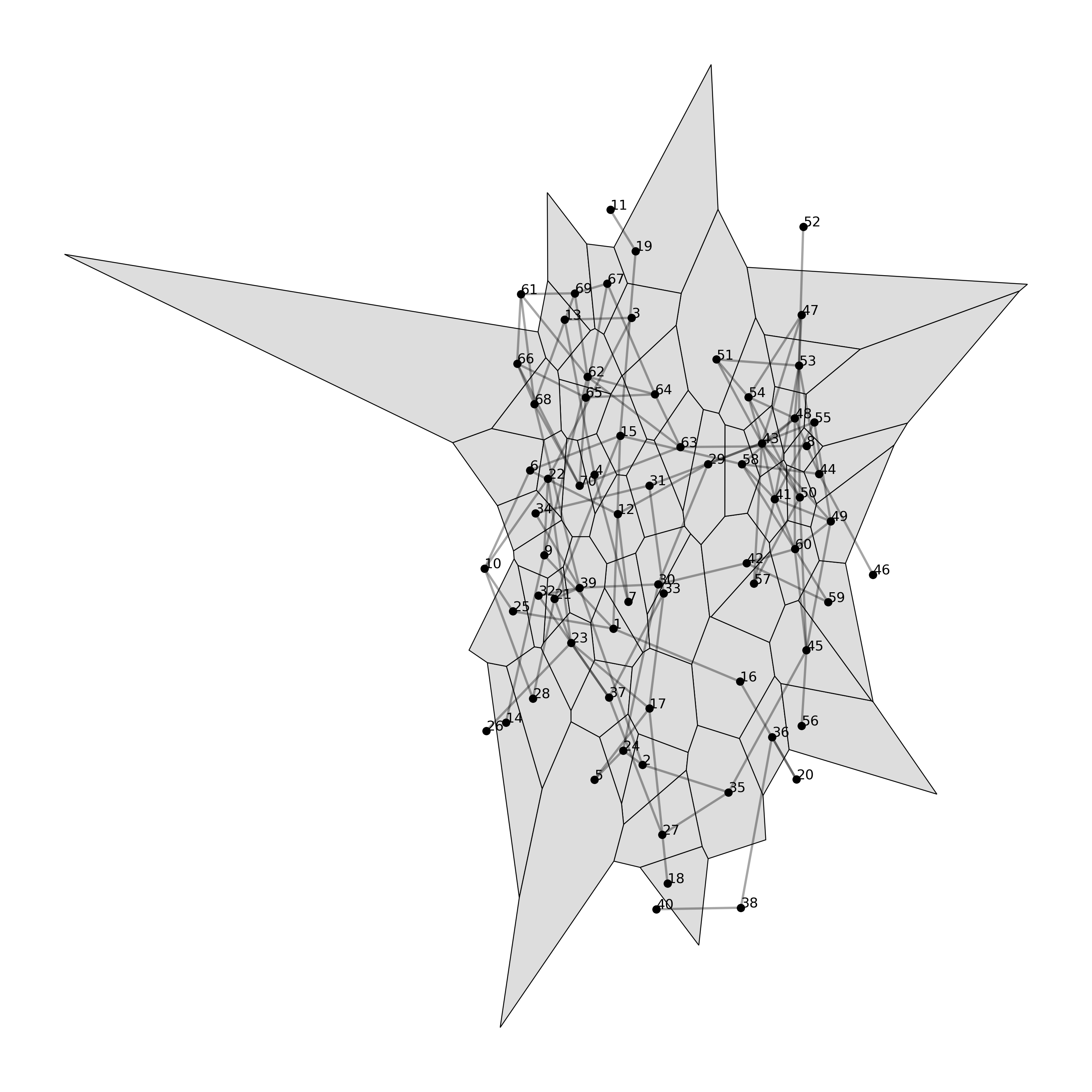}
\caption{Synthetic Voronoi map — topological structure prior to the outbreak (all regions disease-free).}
\label{fig:voronoi_base}
\end{figure*}

\begin{figure*}[h!]
\centering
\includegraphics[width=0.8\textwidth]{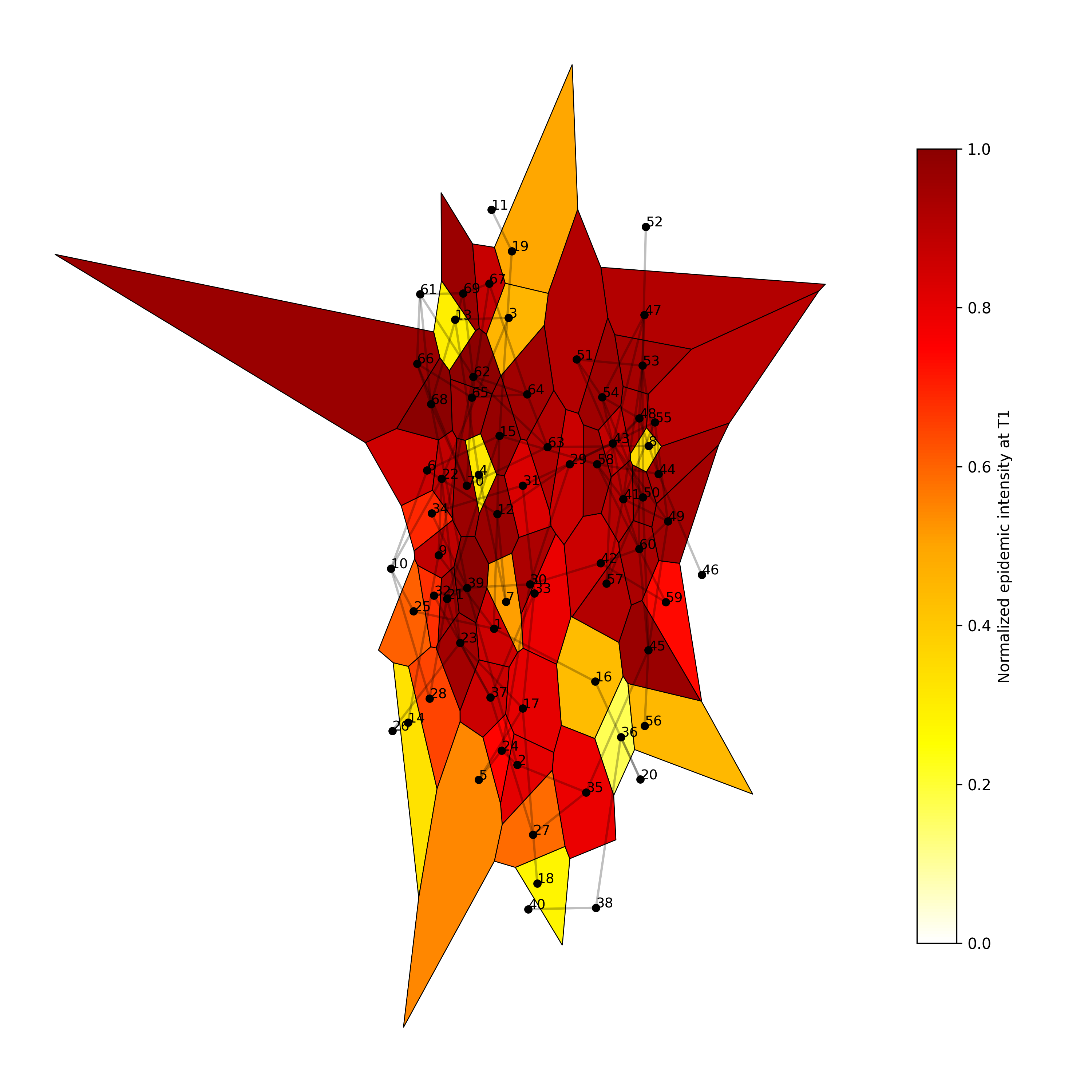}
\caption{Synthetic Voronoi map at peak time $t_1$, with infection intensity per cell.}
\label{fig:voronoi_peak}
\end{figure*}

\subsection{Structural and epidemiological interpretation}

The numerical results corroborate the central hypothesis of this work: curvature acts as a structural modulator of transmissibility on networks. More precisely:

\begin{itemize}[label=--]
    \item Curvature weighting reduces the spectral radius of $M$, thereby lowering the effective epidemic threshold
    $R_0 = \beta\, \lambda_{\max}(M)/\gamma$, in agreement with the spectral arguments developed earlier.
    \item \textbf{Low-curvature regions promote intense outbreaks}, exhibiting high communicability and prolonged persistence.
    \item \textbf{High-curvature regions dampen propagation}, behaving as natural buffers that restrict spread and accelerate epidemic dissipation.
    \item The mesostructural heterogeneity emerging in $M$ does not appear in $A$, highlighting the critical role of curvature-induced geometry.
\end{itemize}

To our knowledge, these results provide the first numerical evidence that curvature-weighted transmissibility can induce pronounced mesoscopic differentiation in otherwise well-mixed modular networks.

Overall, the findings suggest that curvature-based metrics can serve as robust structural predictors of epidemic risk, offering a principled pathway toward spatially informed surveillance strategies and refined intervention planning.

\section{Theoretical Perspectives and Directions for Future Research}

The curvature–weighted SIR model developed in this work suggests the existence
of a deep geometric structure capable of regulating epidemic dynamics on complex
networks. The combination of spectral analysis, monotone dynamical systems, and
edge–based curvature measures opens the possibility of formulating a general
theory of \emph{geometric epidemics}. Advancing such a theory will require new
mathematical results at the interface between geometry, network science, and
nonlinear epidemiological dynamics. In this section, we outline a research
program intended to serve as a foundation for future developments.

\subsection{1. Core Mathematical Structure}

The framework introduced in this article rests upon three structural pillars:

\begin{itemize}[label=--]
    \item[(i)] \textbf{Geometry–weighted contact matrix:}
    \[
        M = A \odot W(\kappa),
    \]
    where $A$ is the adjacency matrix, $\kappa$ is an edge–based curvature
    measure, and $W(\kappa)$ is a smooth positive transformation.

    \item[(ii)] \textbf{Monotone epidemiological system:}
    \[
    \begin{aligned}
    s_i' &= u(1 - s_i) - \beta\, s_i (My)_i,\\
    y_i' &= \beta\, s_i (My)_i - \gamma y_i,\\
    r_i' &= \gamma y_i - u r_i,
    \end{aligned}
    \]
    which preserves the positive cone and exhibits strong monotonicity.

    \item[(iii)] \textbf{Fixed-point operator associated with the endemic equilibrium:}
    \[
        (Ty)_i
        =
        \frac{\beta}{\gamma}
        \left(\frac{u}{\,u + \beta (My)_i\,}\right)
        (My)_i.
    \]
\end{itemize}

This structure produces a model that is simultaneously geometric, spectral, and
dynamically ordered.

\subsection{Structural Laws of Epidemic Dynamics in Curved Networks}

The theoretical analysis developed in the preceding sections allows us to
formulate three structural principles governing the interaction between network
geometry, spectral properties, and global epidemic dynamics. Two of these
principles---previously conjectural---have been rigorously established in this
work. The third remains a deep geometric hypothesis to be explored in future
research.

\begin{theorem}[Spectral reduction induced by curvature]
\label{thm:spectralreduction}
Let $A$ be the adjacency matrix of a connected graph and let
$W(\kappa)$ be a weight matrix defined by $W_{ij}=f(\kappa_{ij})$ with
$0 < f(\kappa_{ij}) \le 1$ whenever $A_{ij}=1$. Then:
\[
    \lambda_{\max}(A \odot W(\kappa))
    \;\le\;
    \lambda_{\max}(A),
\]
with equality if and only if the curvature map $f(\kappa_{ij})$ is constant on
all edges. Thus, curvature fields act as geometric regularizers of connectivity,
reducing the spectral radius and the effective epidemic threshold.
\end{theorem}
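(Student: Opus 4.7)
The plan is to mimic and slightly extend the argument already carried out for Theorem~\ref{thm:spectral-reduction}, since the present statement is essentially its restatement with the curvature transformation $W_{ij}=f(\kappa_{ij})$ playing the role of the generic weights. The central tool will be the Collatz--Wielandt variational characterization of the Perron root for nonnegative irreducible matrices,
\[
\lambda_{\max}(X) \;=\; \max_{x>0}\;\min_{i}\;\frac{(Xx)_i}{x_i},
\]
which converts an entrywise comparison between $B=A\odot W(\kappa)$ and $A$ into a spectral comparison. Under (H1)--(H2), both matrices are nonnegative and irreducible, so their Perron roots are simple and attained at strictly positive eigenvectors.

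First I would record the pointwise bound $0\le B_{ij}\le A_{ij}$, which follows immediately from $f(\kappa_{ij})\in(0,1]$ on edges and from $B_{ij}=0$ off the support of $A$. Second, for any $x>0$ this yields $(Bx)_i\le(Ax)_i$, so that
\[
\min_i \frac{(Bx)_i}{x_i} \;\le\; \min_i \frac{(Ax)_i}{x_i};
\]
taking the maximum over positive $x$ and invoking the Collatz--Wielandt formula produces the desired inequality $\lambda_{\max}(B)\le\lambda_{\max}(A)$. Third, for the equality case I would argue by contradiction: assume $\lambda_{\max}(B)=\lambda_{\max}(A)=:\lambda$ and let $u>0$ be the Perron eigenvector of $B$, so that $Bu=\lambda u$. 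Combining with $Bu\le Au$ yields $Au\ge\lambda u$, and by the Perron--Frobenius characterization of the spectral radius of an irreducible nonnegative matrix as $\max\{\mu: Ax\ge\mu x \text{ for some } x>0\}$, together with uniqueness of the Perron direction, equality must hold throughout. Therefore $Au=\lambda u$ and $(A-B)u=0$; since $A-B$ is nonnegative entrywise and $u$ is strictly positive, each summand vanishes, forcing $A_{ij}(1-f(\kappa_{ij}))=0$, i.e.\ $f(\kappa_{ij})\equiv 1$ on every edge.

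The main obstacle I expect is making the equality step fully rigorous, since a naive application of Collatz--Wielandt delivers only the inequality. The delicate point is the passage from $Au\ge\lambda u$ to $Au=\lambda u$, which relies on the classical fact that for an irreducible nonnegative matrix the Perron root cannot be strictly dominated by the Rayleigh-type ratio of any positive vector. Once this is secured, the concluding implication $(A-B)u=0 \Rightarrow f(\kappa_{ij})\equiv 1$ on edges is immediate, and the geometric interpretation --- curvature acts as an entrywise damping factor whose vanishing alone permits the spectral radius to saturate --- then follows as a direct corollary, completing the structural principle announced in the theorem.
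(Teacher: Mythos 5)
Your argument for the inequality is essentially identical to the paper's proof of Theorem~\ref{thm:spectral-reduction}: record the entrywise bound $0\le B_{ij}\le A_{ij}$, deduce $(Bx)_i\le (Ax)_i$ for all $x>0$, and pass to the spectral comparison via the Collatz--Wielandt formula. Where you genuinely diverge is the equality case. The paper disposes of it in one line, asserting that equality "requires $(Bx)_i=(Ax)_i$ for every positive vector $x$" --- a claim that does not follow directly from the max--min characterization, since equality of the two maxima only constrains behaviour at the optimizing vectors. Your route is the standard rigorous one: take the Perron eigenvector $u>0$ of $B$ (available because $f>0$ on edges makes $B$ irreducible with the same zero pattern as $A$), obtain $Au\ge \lambda u$ from $Bu\le Au$, invoke the subinvariance characterization of the Perron root of an irreducible nonnegative matrix to upgrade this to $Au=\lambda u$, and then use $(A-B)u=0$ with $A-B\ge 0$ and $u>0$ to force $f(\kappa_{ij})\equiv 1$ on every edge. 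This buys an airtight equality argument where the paper's is only a sketch. One small point worth flagging for both you and the paper: what the argument actually establishes is that equality holds if and only if $f\equiv 1$ on the edges, not merely that $f$ is \emph{constant} on the edges (a constant value $c<1$ gives $\lambda_{\max}(B)=c\,\lambda_{\max}(A)<\lambda_{\max}(A)$); your proof gets the correct condition, and the theorem statement should be read accordingly.
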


\begin{conjecture}[Geometric threshold of endemicity]
\label{conj:geothreshold_revised}
There exists a constant $\kappa^\star>0$ such that, if the mean curvature
satisfies
\[
    \overline{\kappa} > \kappa^\star,
\]
then the disease-free equilibrium is globally stable, even when the unweighted
system satisfies $R_0>1$. Sufficiently positive curvature prevents endemic
persistence by reducing the effective connectivity of the network.
\end{conjecture}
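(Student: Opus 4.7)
The plan is to establish a quantitative version of the geometric threshold by combining the spectral reduction of Theorem~\ref{thm:spectralreduction} with a degree-based upper bound on $\lambda_{\max}(M)$ that can be explicitly controlled by the mean curvature $\overline{\kappa}$. Since the model fixes $W_{ij}=f(\kappa_{ij})$ for a positive, smooth, monotonically decreasing map $f$ (the canonical choice being $f(x)=e^{-x}$), the strategy is to identify the smallest value of $\overline{\kappa}$ forcing $(\beta/\gamma)\,\lambda_{\max}(M)<1$ and then invoke Theorem~\ref{thm:globalDFE} to propagate linearized spectral control to global asymptotic stability of $E_0$.

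The argument proceeds through three quantitative reductions. First, I would upper-bound $\lambda_{\max}(M)$ by the maximum weighted degree $\max_i d_i(\kappa)$, where $d_i(\kappa)=\sum_{j:A_{ij}=1} f(\kappa_{ij})$, using either Gershgorin discs or the Collatz--Wielandt characterization already exploited in Theorem~\ref{thm:spectral-reduction}. Next, I would dominate the maximum weighted degree by the mean weighted degree $\tfrac{1}{n}\sum_i d_i(\kappa)=\tfrac{2}{n}\sum_{(i,j)\in E} f(\kappa_{ij})$, which—when the curvature field is reasonably homogeneous relative to the graph's degree sequence—controls the maximum up to a bounded multiplicative factor. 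Finally, exploiting convexity of $f$, Jensen's inequality gives $\tfrac{1}{|E|}\sum_{(i,j)\in E} f(\kappa_{ij})\le f(\overline{\kappa})$, so $\lambda_{\max}(M)\le\tfrac{2|E|}{n}\, f(\overline{\kappa})$. Solving $(\beta/\gamma)(2|E|/n)\,f(\overline{\kappa})<1$ for $\overline{\kappa}$ produces the threshold $\kappa^\star=f^{-1}\!\bigl(\gamma n/(2\beta|E|)\bigr)$, reducing to $\kappa^\star=-\ln\!\bigl(\gamma n/(2\beta|E|)\bigr)$ in the exponential case. With $R_0<1$ secured for $\overline{\kappa}>\kappa^\star$, Theorem~\ref{thm:globalDFE} immediately yields global asymptotic stability of $E_0$.

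The main obstacle I foresee lies in the first reduction: the gap between $\max_i d_i(\kappa)$ and $\lambda_{\max}(M)$ can be substantial whenever the curvature field is strongly heterogeneous along the support of the Perron eigenvector, and in highly irregular graphs the maximum degree may exceed the mean by a factor that negates the Jensen gain. For homogeneous or mildly heterogeneous curvature the Gershgorin estimate is tight and the scheme above closes cleanly, but to attain the universality asserted by the conjecture—that \emph{some} $\kappa^\star>0$ suffices \emph{even when} $R_0(A)>1$ on an arbitrary irreducible topology—one must replace the degree bound with a weighted Rayleigh-quotient estimate against the Perron vector of $M$, or invoke a concentration argument bounding the variance of $\kappa_{ij}$ over edges carrying significant mass in that eigenvector. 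Turning this into a clean, topology-independent threshold is what transforms the quantitative bound of Theorem~\ref{thm:geomthreshold} into the universal statement of the conjecture, and is where I expect the real technical work to be concentrated.
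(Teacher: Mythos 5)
First, note that the paper does not actually prove this statement: it is explicitly left as an open conjecture (see the ``Open Problems'' subsection, which says a full proof ``remains open''). The closest result is Theorem~\ref{thm:geomthreshold}, which proves a quantitative version for the exponential weight $W_{ij}=e^{-\kappa_{ij}}$, and your proposal reconstructs that argument almost verbatim: Gershgorin bound by the maximum weighted degree, passage to the mean weighted degree $\tfrac{2}{n}\sum_{(i,j)\in E}e^{-\kappa_{ij}}$, a Jensen step to bring in $\overline{\kappa}$, the threshold $\kappa^\star=-\ln\bigl(\gamma n/(2\beta|E|)\bigr)$, and finally Theorem~\ref{thm:globalDFE}. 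You also correctly identify the first obstruction that keeps this from being a proof of the conjecture on an arbitrary irreducible topology: the maximum weighted degree is bounded \emph{below}, not above, by the mean weighted degree, so the chain $\lambda_{\max}(M)\le\max_i d_i(\kappa)\le\tfrac{2|E|}{n}e^{-\overline{\kappa}}$ breaks at the second inequality for irregular graphs. That candor is appropriate, since the paper itself classifies the general statement as unresolved.

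There is, however, a second gap that you did not flag and that also affects the paper's own Theorem~\ref{thm:geomthreshold}: the Jensen step runs in the wrong direction. You write that convexity of $f$ gives $\tfrac{1}{|E|}\sum_{(i,j)\in E}f(\kappa_{ij})\le f(\overline{\kappa})$, but for a \emph{convex} $f$ (and $x\mapsto e^{-x}$ is convex) Jensen yields
\[
f\!\left(\overline{\kappa}\right)\;\le\;\frac{1}{|E|}\sum_{(i,j)\in E} f(\kappa_{ij}),
\]
i.e., $e^{-\overline{\kappa}}$ is a \emph{lower} bound on the mean edge weight, not an upper bound. Consequently $\lambda_{\max}(M)\le\tfrac{2|E|}{n}\,e^{-\overline{\kappa}}$ does not follow from this route; a heterogeneous curvature field with the same mean can have an arbitrarily larger average weight $\tfrac{1}{|E|}\sum e^{-\kappa_{ij}}$. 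To close this step you would need either a concave weight map, a reverse-Jensen inequality under a bounded-oscillation or bounded-variance hypothesis on $\{\kappa_{ij}\}$, or a direct weighted Rayleigh-quotient estimate against the Perron vector of $M$ of the kind you sketch at the end. Until both the max-versus-mean degree issue and the Jensen direction are repaired, the argument establishes neither the conjecture nor, strictly speaking, the quantitative bound of Theorem~\ref{thm:geomthreshold} as stated.
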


\begin{theorem}[Ordered convergence to the endemic equilibrium]
\label{thm:ordercontraction_final}
If $R_0>1$, then the fixed-point operator $T$ is strictly monotone on the
positive cone and admits a unique interior fixed point $y^\dagger$. Moreover,
for every initial condition $y_0\in (0,1]^n$,
\[
    T^k(y_0) \longrightarrow y^\dagger,
\]
with monotone convergence, meaning that $T^k(y_0)$ either increases or
decreases monotonically toward the equilibrium. This phenomenon follows from
the theory of monotone dynamical systems
\cite{Hirsch1985,Smith1995}.
\end{theorem}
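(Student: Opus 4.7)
The plan is to follow the Hirsch--Smith strongly monotone systems paradigm, largely repeating the strategy of Theorem~\ref{thm:ordercontraction} but phrasing uniqueness and convergence as a single coherent argument. I would proceed in three stages: (i) verify strict monotonicity of $T$ on the positive cone; (ii) exploit the Perron eigenvector of $M$ to build sub- and supersolutions and obtain a unique interior fixed point; and (iii) sandwich an arbitrary positive orbit between two monotone extremal iterations and invoke uniqueness to conclude convergence to $y^\dagger$.

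For stage (i), if $0 < y < z$ componentwise, then irreducibility of $M$ forces $My < Mz$ strictly, and since $\phi(x) = ux/(u+\beta x)$ is strictly increasing for $x>0$, the componentwise identity $(Ty)_i = (\beta/\gamma)\,\phi((My)_i)$ yields $Ty < Tz$. For stage (ii), I would let $v>0$ be the Perron eigenvector of $M$ associated with $\lambda_{\max}(M)$. A first-order expansion gives $T(\alpha v) = \alpha R_0\, v + O(\alpha^2)$, so $R_0>1$ provides a strict subsolution $T(\alpha v)>\alpha v$ for small $\alpha$; conversely $\phi$ is uniformly bounded by $u/\beta$, producing a supersolution $T(y)<y$ near $\mathbf{1}$. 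Iterating $T$ from each side yields monotone bounded sequences whose limits are interior fixed points, and uniqueness follows via a Krasnoselskii-type argument exploiting the strict subhomogeneity $T(\theta y) > \theta T(y)$ for $\theta \in (0,1)$ and $y>0$, which is inherited from the strict concavity of $\phi$.

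For stage (iii), given any $y_0 \in (0,1]^n$, the strict positivity of $v$ permits constants $\alpha,\beta>0$ with $\alpha v \le y_0 \le \beta v$. Strict monotonicity propagates this sandwich through every iterate, and the extremal orbits $T^k(\alpha v) \nearrow y^\dagger$ and $T^k(\beta v) \searrow y^\dagger$ share the same limit, forcing $T^k(y_0) \to y^\dagger$ by a simple squeeze.

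The principal obstacle, which I expect to absorb most of the technical effort, is justifying that the trajectory itself—not merely its envelope—becomes monotone when $y_0$ is not order-comparable with $y^\dagger$. Here the Hirsch--Smith theory \cite{Hirsch1985,Smith1995} carries the weight: strong monotonicity on the solid positive cone, compactness of orbits, and uniqueness of the interior fixed point together exclude oscillatory limit sets, so after finitely many iterations the orbit enters an order interval surrounding $y^\dagger$ and thereafter converges monotonically. Making this transition precise, along with the Krasnoselskii step for uniqueness, are the only places that require genuine care.
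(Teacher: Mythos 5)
Your proposal follows the same overall architecture as the paper's own (sketched) proof: strict monotonicity of $T$ via irreducibility of $M$ and monotonicity of $\phi(x)=ux/(u+\beta x)$, sub- and supersolutions built from the Perron eigenvector and the bound $\phi\le u/\beta$, and a sandwich $T^k(\alpha v)\le T^k(y_0)\le T^k(\beta v)$ squeezing an arbitrary orbit between two monotone extremal sequences. Where you genuinely diverge is the uniqueness step: the paper argues ``if $Ty=y$ and $Tz=z$ with $0<y<z$ then $Ty<Tz$, a contradiction,'' which is not actually a contradiction (it merely reproduces $y<z$) and in any case only addresses order-comparable pairs of fixed points. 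Your Krasnoselskii-type argument via strict subhomogeneity, $T(\theta y)>\theta T(y)$ for $\theta\in(0,1)$, inherited from the strict concavity of $\phi$ with $\phi(0)=0$, is the standard and correct route for monotone sublinear operators and closes a real gap in the paper's reasoning. You also correctly identify the other soft spot that the paper passes over: the squeeze yields convergence of $T^k(y_0)$ to $y^\dagger$, but not the claimed dichotomy that the orbit itself increases or decreases monotonically when $y_0$ is not order-comparable with $y^\dagger$; your deferral to the Hirsch--Smith machinery (orbit eventually entering an order interval around the unique interior fixed point) is the right way to patch this, though it remains to be written out in detail. In short: same skeleton, but your version repairs the uniqueness argument and is more honest about what the sandwich does and does not prove.
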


These three principles synthesize the interplay between geometry, spectrum, and
asymptotic behavior in epidemic dynamics. Curvature, by regularizing
connectivity, emerges as a structural mechanism capable of altering epidemic
thresholds and shaping the spatial organization of the endemic equilibrium.

\subsection{Open Problems and Theoretical Directions}

The results established in this work open a number of structural questions that
remain unresolved:

\begin{itemize}[label=--]
    \item[(i)] \textbf{Geometric threshold of endemicity.}  
    A full proof of Conjecture~\ref{conj:geothreshold_revised} remains open.
    Possible approaches include spectral majorization, Löwner order techniques,
    or comparison principles for nonlinear operators.

    \item[(ii)] \textbf{Smooth dependence of the endemic equilibrium on curvature.}  
    For perturbations of the form
    \[
        M(\varepsilon)=A\odot W(\kappa+\varepsilon H),
    \]
    establishing existence and determining the sign of
    \[
        \frac{d y^\dagger(\varepsilon)}{d\varepsilon}
    \]
    would provide a direct quantitative link between geometric deformation of
    the network and changes in endemicity.

    \item[(iii)] \textbf{Generalizations to models with memory or stochasticity.}  
    The interaction between curvature and fractional or stochastic operators is
    largely unexplored and suggests new geometric structures for epidemic
    dynamics.
\end{itemize}

\subsection{Natural Extensions and Future Perspectives}

The proposed formulation admits natural extensions connecting curvature to
broader classes of epidemic and diffusion dynamics:

\begin{itemize}[label=--]
    \item[(i)] \textbf{Fractional models} with memory,
    \[
        D^\alpha y = \beta s(My) - \gamma y,
    \]
    where curvature may influence memory kernels in analogy with diffusion on
    manifolds of variable curvature.

    \item[(ii)] \textbf{Stochastic models} with multiplicative noise,
    \[
        dY = (\beta SY - \gamma Y)\,dt + \sigma\, dW,
    \]
    whose geometric interpretation may involve Bakry--Émery curvature and 
    log-Sobolev criteria.

    \item[(iii)] \textbf{Integro-differential or spatially continuous models},
    which approximate the limit of dense graphs or spatial meshes and connect
    graph curvature with Ricci curvature on manifolds.
\end{itemize}

\subsection{Final Considerations}

The analysis developed in this work reveals that curvature is not merely a
geometric descriptor of a network, but a structural parameter capable of
reshaping fundamental epidemic mechanisms. By embedding curvature into the
contact matrix, we obtain a diffusion kernel whose spectral, geometric and
dynamical properties determine the global behaviour of the SIR system.

The results established here provide three major contributions.  
First, we proved that curvature acts as a geometric regularizer: any positive
curvature field reduces the spectral radius of the contact matrix and therefore
decreases the effective reproduction number.  
Second, we identified explicit geometric conditions under which the disease-free
equilibrium becomes globally stable, highlighting the existence of a curvature
threshold capable of suppressing epidemic persistence.  
Third, we characterised the global convergence structure of the endemic
equilibrium when $R_0>1$, showing that the model exhibits ordered dynamics in
the sense of monotone systems.

Together, these results outline the foundations of a mathematical framework for
\emph{geometric epidemics}, in which diffusion, curvature and nonlinear
dynamics interact systematically. The theory emerging from this perspective is
not yet complete, and the open problems identified in the previous section
indicate several promising directions. Establishing a full geometric threshold
theorem, understanding how the endemic equilibrium depends smoothly on
curvature, and extending the model to fractional, stochastic, or continuous
spatial settings would significantly deepen the interplay between geometry and
epidemiology.

Beyond theoretical interest, these findings provide structural insights relevant
to empirical modelling of spatially heterogeneous diseases. Curvature captures
patterns of communication, bottlenecks and anisotropies that traditional
homogeneous weights cannot encode. As suggested by recent empirical analyses of
dengue transmission in Recife, incorporating curvature into epidemic models may
offer a principled mechanism for explaining spatial disparities, identifying
persistent hotspots, and evaluating network-based interventions.

In this sense, curvature serves as a bridge between theory and application:
a geometric parameter that not only shapes the mathematics of the model but is
also measurable from real data. This dual character opens opportunities for
future interdisciplinary work combining mathematical modelling, network
geometry, spatial epidemiology and data science.

Overall, the framework developed in this article supports the view that epidemic
dynamics on networks with intrinsic geometry can be governed by structural
principles that are spectral, geometric and dynamical in nature. The further
development of these principles may lead to a unified theory capable of
capturing both the complexity of real-world epidemics and the mathematical
regularities that underlie their global behaviour.

\nocite{*}

\end{document}